\DeclareMathOperator{\Id}{Id}
\DeclareMathOperator{\tr}{tr}
\DeclareMathOperator{\Vol}{Vol}
\DeclareMathOperator{\dvol}{dvol}
\DeclareMathOperator{\Ric}{Ric}
\newcommand{\defn}[1]{{\boldmath\bfseries#1}}
\newcommand{\grd}{g_{\mathrm{rd}}}
\newcommand{\ghyp}{g_{\mathrm{hyp}}}
\newcommand{\gfl}{g_{\mathrm{flat}}}
\newcommand{\Diff}{\mathrm{Diff}}
\newcommand{\cg}{\widetilde{g}}
\newcommand{\cu}{\widetilde{u}}
\newcommand{\cX}{\widetilde{X}}
\newcommand{\cgamma}{\widetilde{\gamma}}
\newcommand{\csigma}{\widetilde{\sigma}}
\newcommand{\cmG}{\widetilde{\mathcal{G}}}
\newcommand{\hg}{\widehat{g}}
\newcommand{\lp}{\langle}
\newcommand{\rp}{\rangle}
\newcommand{\lv}{\lvert}
\newcommand{\rv}{\rvert}
\newcommand{\mD}{\mathcal{D}}
\newcommand{\bN}{\mathbb{N}}
\newcommand{\bR}{\mathbb{R}}
\newcommand{\bZ}{\mathbb{Z}}
\def\sideremark#1{\ifvmode\leavevmode\fi\vadjust{\vbox to0pt{\vss
 \hbox to 0pt{\hskip\hsize\hskip1em
 \vbox{\hsize3cm\tiny\raggedright\pretolerance10000
 \noindent #1\hfill}\hss}\vbox to8pt{\vfil}\vss}}}
\newcommand{\suchthat}{\mathrel{}:\mathrel{}}
\newtheorem{theorem}{Theorem}[section]
\newtheorem{proposition}[theorem]{Proposition}
\newtheorem{lemma}[theorem]{Lemma}
\newtheorem{corollary}[theorem]{Corollary}
\theoremstyle{definition}
\theoremstyle{remark}
\newtheorem{remark}[theorem]{Remark}
\numberwithin{equation}{section}
\begin{document}

\title[The GJMS operators of a special Einstein product]{A factorization of the GJMS operators of special Einstein products and applications}
\author{Jeffrey S. Case}
\address{Department of Mathematics \\ Penn State University \\ University Park, PA 16802}
\email{jscase@psu.edu}
\author{Andrea Malchiodi}
\address{Scuola Normale Superiore \\ Piazza dei Cavalieri 7 \\ 56126 Pisa}
\email{andrea.malchiodi@sns.it}
\keywords{GJMS operator, Q-curvature, special Einstein product}
\subjclass[2020]{Primary 58J60; Secondary 35B50, 35J30, 53C18}
\begin{abstract}
 We show that the GJMS operators of a special Einstein product factor as a composition of second- and fourth-order differential operators.
 In particular, our formula applies to the Riemannian product $H^{\ell} \times S^{d-\ell}$.
 We also show that there is an integer $D = D(k,\ell)$ such that if $d \geq D$, then for any special Einstein product $N^\ell \times M^{d-\ell}$, the Green's function for the GJMS operator of order $2k$ is positive.
 As a result, these products give new examples of closed Riemannian manifolds for which the $Q_{2k}$-Yamabe problem is solvable.
\end{abstract}
\maketitle

\section{Introduction}
\label{sec:intro}

Let $(X^d,g)$ be a Riemannian manifold and let $k \in \bN$ be a positive integer;
if~$d$ is even, assume additionally that $k \leq d/2$.
The \defn{GJMS operator}~\cite{GJMS1992} of order~$2k$ is a formally self-adjoint conformally covariant differential operator $P_{2k}^g$ with leading-order term $(-\Delta_g)^k$.
Here \defn{conformal covariance} means that
\begin{equation}
 \label{eqn:conformally-covariant}
 P_{2k}^{e^{2\Upsilon}g}(u) = e^{-\frac{d+2k}{2}\Upsilon} P_{2k}^g \left( e^{\frac{d-2k}{2}\Upsilon}u \right)
\end{equation}
for all $u,\Upsilon \in C^\infty(X)$.
Explicit formulas for $P_{2k}^g$ are known in some low-order cases~\cites{LeeParker1987,Paneitz1983,Branson1995,Juhl2013,Wunsch1986,GoverPeterson2003}, but only a recursive formula is available for the general-order case~\cites{Juhl2013,FeffermanGraham2013}.
However, there is a simple formula on Einstein manifolds ~\cites{Gover2006q,FeffermanGraham2012} for these operators:
if $\Ric_g = (d-1)\lambda g$, then
\begin{equation}
 \label{eqn:einstein-factorization}
 P_{2k}^g = \prod_{j=1}^{k} \left( -\Delta_g + \frac{(d+2j-2)(d-2j)}{4}\lambda \right) .
\end{equation}
In particular, $P_{2k}=(-\Delta)^k$ on Euclidean space.
Our convention is that $-\Delta \geq 0$.

The factorization~\eqref{eqn:einstein-factorization} plays an important role in Beckner's proof~\cite{Beckner1993} of the sharp Sobolev inequalities.
It also motivated studies of the Paneitz--Branson-type operators~\cite{DjadliHebeyLedoux2000}.
These observations lead us to look for other situations where the GJMS operators have a nice factorization.

A \defn{special Einstein product} is a Riemannian product $(M^\ell \times N^{d-\ell} , g \oplus h)$ of Einstein manifolds such that $\Ric_g = -(\ell-1)\lambda g$ and $\Ric_h = (d-\ell-1)\lambda h$ for some constant $\lambda > 0$.
The assumption $\lambda>0$ is for convenience;
the case $\lambda=0$ yields Ricci flat manifolds and the case $\lambda<0$ is equivalent to our definition after switching the order of the factors.

The simplest examples of special Einstein products are Riemannian products of an $\ell$-dimensional hyperbolic manifold and a $(d-\ell)$-dimensional spherical manifold;
these are precisely the locally conformally flat special Einstein products~\cite{Besse}.
When $\ell=1$, these are quotients of the Riemannian product $S^1 \times S^{d-1}$.
These products arise, for example, as models for the singular $Q$-curvature prescription problem~\cites{MazzeoPacard1996,AoChanDelatorreFontelosGonzalezWei2019,ChangHangYang2004,MazzeoSmale1991,AndradeWei2022,LuoWeiZou2021,LuoWeiZou2022} and as examples of conformal manifolds with geometrically distinct metrics of constant $Q$-curvature~\cites{Schoen1989,BettiolPiccioneSire2021,BettiolPiccione2018,BettiolGonzalezMaalaoui2023}.
More generally, Case and Chang~\cite{CaseChang2013} used the special Einstein product $M^\ell \times S^{d-\ell}$ of a Poincar\'e--Einstein manifold $(M^\ell,g_+)$ with the round $(d-\ell)$-sphere to realize the fractional GJMS operators~\cite{GrahamZworski2003} as generalized Dirichlet-to-Neumann operators.
To make this realization, they needed the factorization
\begin{equation}
 \label{eqn:case-chang-factorization}
 P_{2k}^{g_+ \oplus h} \circ \pi^\ast = \pi^\ast \circ \prod_{j=0}^{k-1} \left( -\Delta_{g_+} - \frac{(2\ell-d+2k-4j-2)(d-2k+4j)}{4} \right) ,
\end{equation}
$\pi \colon M^\ell \times S^{d-\ell} \to M^\ell$, of the restriction of GJMS operator $P_{2k}$ to lifts of smooth functions on $M^\ell$.
One can make sense of this formula for fractional values of $d$ using the language of smooth metric measure spaces~\cite{Khaitan2022}.

Ao et al.\ used~\cite{AoChanDelatorreFontelosGonzalezWei2019} the Helgason--Fourier transform~\cite{Helgason2008} to prove a spectral formula for the fractional GJMS operators of the locally conformally flat special Einstein products.
While their formula completely describes the spectrum, it is difficult to use for other purposes, such as determining when $P_{2k}$ satisfies the Strong Maximum Principle.

The first goal of this article is to prove that the GJMS operators of special Einstein products factor as a composition of second- and fourth-order differential operators.

\begin{theorem}
 \label{factorization}
 Let $(M^\ell \times N^{d-\ell}, g \oplus h)$ be a special Einstein product.
 Let $k \in \bN$;
 if $d$ is even, assume additionally that $k \leq \frac{d}{2}$.
 Denote
 \begin{multline}
  \label{eqn:defn-Dt}
  D_t := (\Delta_g + \Delta_h)^2 - \frac{(d-2)(d-2\ell)}{2}\lambda(\Delta_g+\Delta_h) - 2t^2\lambda(\Delta_g-\Delta_h) \\
   + \left( \left(\frac{d-2}{2}\right)^2 - t^2\right)\left( \left(\frac{d-2\ell}{2}\right)^2 - t^2\right)\lambda^2 .
 \end{multline}
 If $k$ is even, then
 \begin{subequations}
  \label{eqn:factorization}
  \begin{equation}
   \label{eqn:factorization-even}
   P_{2k}^{g \oplus h} = \prod_{s=0}^{(k-2)/2} D_{k-1-2s} .
  \end{equation}
  If $k$ is odd, then
  \begin{equation}
   \label{eqn:factorization-odd}
   P_{2k}^{g \oplus h} = \left( -\Delta_g - \Delta_h + \frac{(d-2)(d-2\ell)}{4}\lambda \right) \prod_{s=0}^{(k-3)/2} D_{k-1-2s} ,
  \end{equation}
 \end{subequations}
 with the convention that the empty product is the identity.
\end{theorem}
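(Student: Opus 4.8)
The plan is to produce an explicit model of the conformal manifold $(M^\ell\times N^{d-\ell},[g\oplus h])$ in which the GJMS operators separate, and then to match the output against \eqref{eqn:factorization}. The key structural input is that the Fefferman--Graham ambient metric of a special Einstein product is the metric product of two cones: the Lorentzian cone $\widetilde g_M=-ds^2+s^2\hat g$ over $(M^\ell,\hat g)$ and the Riemannian cone $\widetilde g_N=dr^2+r^2\hat h$ over $(N^{d-\ell},\hat h)$, where $\hat g=\lambda g$ and $\hat h=\lambda h$. Indeed, $\widetilde g_M$ is Ricci flat precisely because $\Ric_{\hat g}=-(\ell-1)\hat g$, $\widetilde g_N$ is Ricci flat precisely because $\Ric_{\hat h}=(d-\ell-1)\hat h$, the product is Ricci flat with the homogeneity required of an ambient metric under the simultaneous dilation $(s,r)\mapsto(ts,tr)$, and the initial surface is $\{s=r\}$, on which the induced metric degenerates to $s^2(\hat g\oplus\hat h)$ and therefore represents the conformal class $[g\oplus h]$. (Equivalently, one may work with the associated Poincar\'e--Einstein metric, which is the doubly warped product $x^{-2}(dx^2+\phi(x)^2 g+\psi(x)^2 h)$ for explicit warping functions $\phi,\psi$; this is the point of view of Case--Chang, who treated $N=S^{d-\ell}$ on lifts from $M$ and were thereby led to \eqref{eqn:case-chang-factorization}.)

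The payoff is that the ambient Laplacian splits as a commuting sum $\widetilde\Delta=\widetilde\Delta_M+\widetilde\Delta_N$. Fix a joint eigenfunction: $\Delta_g\phi=-\mu\phi$, $\Delta_h\psi=-\nu\psi$. On the span of the functions $s^\gamma r^\delta\phi\psi$ one computes
\[
 \widetilde\Delta_M\bigl(s^\gamma r^\delta\phi\psi\bigr)=-\Bigl(\gamma(\gamma+\ell-1)+\tfrac\mu\lambda\Bigr)s^{\gamma-2}r^\delta\phi\psi,\qquad
 \widetilde\Delta_N\bigl(s^\gamma r^\delta\phi\psi\bigr)=\Bigl(\delta(\delta+d-\ell-1)-\tfrac\nu\lambda\Bigr)s^\gamma r^{\delta-2}\phi\psi,
\]
so the homogeneous, formally harmonic extension $\widetilde f$ of a density of the weight appropriate to $P_{2k}$ — with leading term $s^\alpha r^\beta\phi\psi$ for the suitable indicial root $\alpha$ — is governed by a single two-term recursion among the coefficients of $s^{\alpha-2m}r^{\beta+2m}\phi\psi$, and $P_{2k}^{g\oplus h}f=c_k\,(\widetilde\Delta^k\widetilde f)\big|_{\{s=r\}}$ reduces to a finite product of the recursion factors. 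In particular $P_{2k}^{g\oplus h}$ commutes with $\Delta_g$ and with $\Delta_h$ and acts on each joint eigenspace by a polynomial in $(\mu,\nu)$; the right-hand side of \eqref{eqn:factorization} is manifestly of the same type, so it suffices to check that the two agree on each joint eigenspace. (For non-compact factors one localizes to compact quotients, or observes that $P_{2k}^{g\oplus h}$ is given by a formula in $\nabla$, $\lambda$, and the Weyl tensors of $g$ and $h$ common to all special Einstein products with the given $(d,\ell)$, and checks the identity on a compact example; a byproduct of the proof is that the Weyl contributions to $P_{2k}^{g\oplus h}$ cancel.)

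It remains to recognize the finite product. Write $p=\tfrac{d-2}{2}$, $q=\tfrac{d-2\ell}{2}$, so $\tfrac{(d-2)(d-2\ell)}{4}=pq$, $p+q=d-\ell-1$, $p-q=\ell-1$, and $(p+q)^2-(p-q)^2=4pq$. Rearranging \eqref{eqn:defn-Dt} gives
\[
 D_t=\Bigl(-\Delta_g-\Delta_h+pq\lambda+t^2\lambda\Bigr)^2-t^2\lambda^2\Bigl((d-\ell-1)^2-\tfrac{4}{\lambda}\Delta_h\Bigr),
\]
and hence, on the joint eigenspace above, with $R=\sqrt{(d-\ell-1)^2+\tfrac{4\nu}{\lambda}}$ and $S=\sqrt{(\ell-1)^2-\tfrac{4\mu}{\lambda}}$ the indicial quantities of the two cones,
\[
 D_t\big|_{\mu,\nu}=\lambda^2\Bigl(t^2-\bigl(\tfrac{R+S}{2}\bigr)^2\Bigr)\Bigl(t^2-\bigl(\tfrac{R-S}{2}\bigr)^2\Bigr),
\]
while the extra second-order factor in \eqref{eqn:factorization-odd} equals $\tfrac{\lambda}{4}(R+S)(R-S)$ on that eigenspace. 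With $t$ ranging over $k-1,k-3,\dots$, the product $\prod_s D_{k-1-2s}$ — together with this extra factor when $k$ is odd — is thus exactly a product of $2k$ linear factors in $\tfrac{R\pm S}{2}$, which is precisely the shape produced by the recursion of the previous step; matching the two finite products is a Pochhammer/$\Gamma$-function bookkeeping. When one of the two factors degenerates, this recovers \eqref{eqn:einstein-factorization}.

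The steps demanding care are: (i) setting up the ambient normal form precisely enough — the homogeneity weight, the choice of indicial root $\alpha$, and the order to which $\widetilde f$ must be made harmonic — to justify $P_{2k}^{g\oplus h}f=c_k(\widetilde\Delta^k\widetilde f)|_{\{s=r\}}$ and the ensuing recursion; (ii) the critical case $d$ even and $k=\tfrac d2$, where the harmonic extension carries the usual order-$k$ obstruction — here one proves \eqref{eqn:factorization} for large $d$ and passes to the critical dimension by dimensional continuation, the explicit product structure making this transparent; and (iii) the combinatorial step of identifying the finite product with $\prod_s D_{k-1-2s}$, where one must track which indicial root, $\tfrac{R+S}{2}$ or $\tfrac{R-S}{2}$, and which sign, pairs with which $D_{k-1-2s}$. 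None of these is conceptually hard once the product-of-cones picture is in hand, but each requires attention.
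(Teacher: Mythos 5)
Your proposal is correct and follows essentially the same route as the paper: the product-of-cones ambient metric is the Gover--Leitner normal-form metric~\eqref{eqn:special-ambient} in different coordinates, the separable extensions $s^\gamma r^\delta\phi\psi$ are the paper's $(1+\mu\rho)^s(1-\mu\rho)^{w-s}u$ from \cref{key-laplacian-lemma}, and the reduction to a product of linear factors with roots $\tfrac{R\pm S}{2}$ on joint eigenspaces is exactly the content of \cref{mar-formula} and the final Gamma-function bookkeeping. The one simplification you are missing is to use the definition $P_{2k}u = (-\Delta_{\cg})^k\bigl(t^{(2k-d)/2}\cu\bigr)\big|_{\rho=0}$ for an \emph{arbitrary} homogeneous extension $\cu$ rather than the harmonic-extension characterization; this removes both your worry about the order-$k$ obstruction in the critical case $k=\tfrac{d}{2}$ (no dimensional continuation is needed) and the need to diagonalize $\Delta_g$ on a possibly non-compact $M$, since only $\Delta_h$ on the automatically compact positive-Einstein factor $N$ must be diagonalized.
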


The cases $\ell=0$ and $\ell=d$ are allowed in \cref{factorization}, with the convention $\Delta=0$ on zero-dimensional manifolds.
In this case, Equation~\eqref{eqn:factorization} recovers the factorization~\eqref{eqn:einstein-factorization} of the GJMS operators at Einstein manifolds.

The restriction $k \leq \frac{d}{2}$ in \cref{factorization} is required because of the constraint in the definition of the GJMS operators.
This restriction can be removed for locally conformally flat special Einstein products;
see \cref{lcf-remove-restriction} below for details.

We give a direct proof of \cref{factorization} using the explicit ambient metric of a special Einstein product~\cite{GoverLeitner2009}.
The main point is that the ambient Laplacian is well-behaved on a particular family of extensions (cf.\ \cites{Khaitan2022,Matsumoto2013}), leading to a formula for $P_{2k}$ as the sum of compositions of second-order operators; see \cref{main-thm} below.
Using the decomposition of functions on $(N^{d-\ell},h)$ in terms of eigenvalues of $\Delta_h$, we show that this sum is equivalent to a spectral formula involving the Gamma function;
see \cref{mar-formula} below.
We then expand this spectral formula to recover the factorization~\eqref{eqn:factorization}.
Our spectral formula coincides with the formula of Ao et al.~\cite{AoChanDelatorreFontelosGonzalezWei2019} in the case of local GJMS operators on locally conformally flat special Einstein products.
While our formula is also valid on more general special Einstein products, it is unlikely that the fractional order formula of Ao et al.\ generalizes as well, due to the dependence of the fractional GJMS operators on the choice of Poincar\'e--Einstein fill-in.

It would be interesting to determine if one can also produce ``nice'' formulas for the GJMS operators of other manifolds for which an explicit ambient metric is known~\cites{LeistnerNurowski2010,LeistnerNurowski2012,AndersonLeistnerNurowski2020}.

One reason to find explicit formulas for the GJMS operators is to better understand their analytic properties.
For example, we say that the GJMS operator $P_{2k}$ satisfies the \defn{Strong Maximum Principle} if $P_{2k}u \geq 0$ implies $u>0$ or $u=0$.
In general, $P_{2k}$ does not have this property;
e.g.\ $P_2$ satisfies the Strong Maximum Principle if and only if the Yamabe constant is positive.
The factorization~\eqref{eqn:einstein-factorization} implies that if $d > 2k$ and $(X^d,g)$ is a closed conformally Einstein manifold with positive Yamabe constant, then $P_{2k}$ satisfies the Strong Maximum Principle.
More strikingly, Gursky and Malchiodi~\cite{GurskyMalchiodi2014} and Hang and Yang~\cite{HangYang2014}, used the local formula for the Paneitz operator to find sufficient conditions for $P_4$ to satisfy the Strong Maximum Principle.

The second goal of this article is to give a sufficient condition for the GJMS operators of a special Einstein product to satisfy the Strong Maximum Principle.

\begin{theorem}
 \label{positivity}
 Let $k,\ell$ be nonnegative integers.
 There is a constant $D = D(k,\ell)$ such that if $d \geq D$ and $(M^\ell \times N^{d-\ell}, g \oplus h)$ is a closed special Einstein product, then $P_{2k}$ satisfies the Strong Maximum Principle and its Green's function is positive.
\end{theorem}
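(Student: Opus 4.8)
The plan is to exploit the explicit factorization from \cref{factorization} together with the spectral decomposition on the factors. Since $P_{2k}$ on a special Einstein product is a polynomial in $\Delta_g$ and $\Delta_h$ with coefficients depending on $d$, $\ell$, $\lambda$, $k$, it suffices to understand the behavior of that polynomial on the joint spectrum of $(-\Delta_g, -\Delta_h)$. The eigenvalues $\mu$ of $-\Delta_g$ lie in $\{0\} \cup [\mu_1(g),\infty)$ with $\mu_1(g) > 0$ (by closedness), and similarly for $-\Delta_h$; moreover, because $(M^\ell,g)$ is negative Einstein and closed while $(N^{d-\ell},h)$ is positive Einstein, Lichnerowicz-type bounds control $\mu_1(h)$ from below by a multiple of $\lambda$ independent of further geometry. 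The strategy is to show that for $d$ large each factor $D_t$ (and the leading linear factor when $k$ is odd) is a positive operator, i.e.\ bounded below by a positive constant on the relevant spectral region, so that the composition is positive; positivity of the Green's function and the Strong Maximum Principle then follow from standard arguments (iterating the second-order case, or Giraud-type estimates on the iterated resolvent).

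First I would analyze a single factor $D_t$ as a function of $(\mu,\tau) = (-\Delta_g\text{-eigenvalue}, -\Delta_h\text{-eigenvalue})$. Writing $D_t(\mu,\tau) = (\mu+\tau)^2 + \frac{(d-2)(d-2\ell)}{2}\lambda(\mu+\tau) + 2t^2\lambda(\tau-\mu) + c_t\lambda^2$ with $c_t = \left(\left(\tfrac{d-2}{2}\right)^2 - t^2\right)\left(\left(\tfrac{d-2\ell}{2}\right)^2 - t^2\right)$, one sees that for $d \gg \ell, k$ the coefficient $(d-2)(d-2\ell)/2$ is large and positive and $c_t$ is large and positive, while $|t| \leq k-1$ is fixed. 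On the region $\mu = 0$: $D_t(0,\tau) = \tau^2 + \left(\tfrac{(d-2)(d-2\ell)}{2} + 2t^2\right)\lambda\tau + c_t\lambda^2$, a quadratic in $\tau$ with positive leading coefficient, positive linear coefficient, and positive constant term — hence positive for all $\tau \geq 0$, with a quantitative lower bound $c_t\lambda^2 > 0$ at $\tau = 0$ (the lift of a constant) and growth thereafter. On the region $\mu \geq \mu_1(g) > 0$: the only possibly-negative term is $-2t^2\lambda\mu$, but this is dominated by $\tfrac{(d-2)(d-2\ell)}{2}\lambda\mu$ once $d$ is large, and by $\mu^2$ for $\mu$ large; so again $D_t \geq \epsilon\lambda^2 > 0$ uniformly. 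The key observation making this uniform over all special Einstein products is that the ``dangerous'' directions ($\mu$ small) are compensated by the $c_t\lambda^2$ constant term, and $\lambda$ is the single scale in the problem — rescaling $g\oplus h$ by a constant rescales $\lambda$ and all eigenvalues consistently, so positivity is scale-invariant and one may normalize $\lambda = 1$.

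With each factor shown positive, the composition $P_{2k} = \prod D_t$ (times one positive linear factor if $k$ odd) is a positive self-adjoint operator with positive first eigenvalue; in particular $0$ is not in its spectrum, it is invertible, and $P_{2k} u \geq 0$ forces $u \geq 0$ by testing against the positive spectral projections. Upgrading $u \geq 0$ to $u > 0$ (the strict Strong Maximum Principle) and positivity of the Green's function requires a pointwise argument: I would iterate the classical second-order maximum principle through the factorization — but $D_t$ is fourth order, so instead I would use that each $D_t$ with $d$ large factors further (over $\bR$, not necessarily with real differential-operator roots) into two Schrödinger operators $-\Delta_g - \Delta_h + \text{const}$, provided the discriminant of $D_t$ as a quadratic in $(\mu+\tau)$ is nonnegative and the roots are real; for $d$ large the roots are $-(\mu+\tau) = \tfrac{(d-2)(d-2\ell)}{4}\lambda \pm O(d)\lambda$, real and negative, so $D_t = (-\Delta_g-\Delta_h + a_t)(-\Delta_g - \Delta_h + b_t)$ with $a_t, b_t > 0$ constants — \emph{except} this requires the cross term $2t^2\lambda(\mu-\tau)$ to be absorbable, which it is not exactly, since it breaks the dependence on $\mu+\tau$ alone. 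The honest fix is to instead invoke Giraud's lemma: positivity of the Green's function of a positive power-type operator follows once one knows the Green's functions of lower-order positive pieces are positive and the operator is a product of such; alternatively, one shows directly that $G_{2k} = \prod G_{D_t}$ where each $G_{D_t}$ is positive because $D_t$, having positive spectrum bounded away from $0$ and being a fourth-order operator whose leading symbol is $(-\Delta)^2$ on a closed manifold, has a positive Green's function exactly when a spectral/heat-kernel positivity condition holds — and here I would cite the analogue of the argument used for the conformally Einstein case (the remark after \eqref{eqn:einstein-factorization}) adapted to the two-factor setting.

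\medskip

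The main obstacle I anticipate is precisely this last step: passing from \emph{operator positivity} (easy, from the spectral picture above) to \emph{pointwise positivity of the Green's function and the strict Strong Maximum Principle}. For second-order operators this is classical, and for products of second-order operators with positive Green's functions one can use Giraud's lemma; but $D_t$ is genuinely fourth order and does not split as a product of second-order \emph{differential} operators on the product manifold (the $2t^2\lambda(\Delta_g - \Delta_h)$ term obstructs this). I expect the resolution is to show that for $d$ large enough, $D_t$ is a small perturbation — in a suitable operator-norm-relative-to-$(\Delta_g + \Delta_h)^2$ sense, uniformly in the geometry — of the conformally-Einstein-type operator $\prod_{j}(-\Delta_g - \Delta_h + \text{positive const})$, whose Green's function positivity is known; then a perturbation argument (the set of operators with positive Green's function is open in an appropriate topology, given a uniform spectral gap) closes the proof. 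Making the perturbation estimate uniform over all closed special Einstein products $N^\ell \times M^{d-\ell}$ with $d \geq D(k,\ell)$ — using only $\lambda$-normalization and the universal Lichnerowicz lower bound on $\mu_1(h)$ — is where the real work lies, and it is what forces $D$ to depend on $k$ and $\ell$.
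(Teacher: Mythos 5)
You have correctly reduced the problem to the factors $D_t$ of \cref{factorization}, and your spectral analysis of each factor is essentially right (up to a sign in the cross term, which does not affect the conclusion): for $d$ large relative to $k$ and $\ell$, each $D_t$ is a positive self-adjoint operator, uniformly over closed special Einstein products after normalizing $\lambda=1$. But, as you yourself flag, this is not yet the theorem: spectral positivity of a fourth-order operator does not by itself imply the Strong Maximum Principle or positivity of its Green's function, and neither of the two routes you propose for closing this gap works as written. The splitting of $D_t$ into two second-order \emph{differential} operators fails precisely because of the $\Delta_g-\Delta_h$ term, as you note; and the perturbation argument (``the set of operators with positive Green's function is open'') is only sketched and would require a uniform positive lower bound on the Green's function away from the diagonal over the entire family of products, which is not available from the spectral data alone. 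So the central step of the proof is missing.

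The paper closes this gap by proving a genuine pointwise maximum principle for each fourth-order factor directly (\cref{baby-maximum-principle}): one writes $D_t u = \Delta^2 u + 4a\lp P,\nabla^2u\rp - (d-2)J\Delta u + cu$ with $a=t^2$ and $P=\tfrac12\bigl((-g)\oplus h\bigr)$, and runs a Gursky--Malchiodi-type continuity argument on the auxiliary second-order quantity $S_\lambda = \Delta u_\lambda + \tfrac{2}{d-4}u_\lambda^{-1}\lv\nabla u_\lambda\rv^2 + e u_\lambda$, applying the Bochner formula and Cauchy--Schwarz at an interior maximum of $S_\lambda$ and optimizing the constant $e$; the resulting curvature hypotheses~\eqref{eqn:assumption} are then verified for $d\geq D(k,\ell)$ by explicit computation on the product. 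This pointwise argument---not the spectral one---is what yields the Strong Maximum Principle for each factor, hence for the composition, and the positivity of the Green's function. To salvage your approach you would need to supply an analogue of that lemma; the spectral computation alone cannot do the job.
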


While we do not know the minimal value of $D(k,\ell)$ in general, \cref{factorization} implies that if $d = 2(k+\ell-1)$ and $\ell \geq 1$, then constants are in the kernel of $P_{2k}$ on the special Einstein product $(M^\ell \times N^{d-\ell}, g \oplus h)$.
Hence $D(k,\ell) \geq 2k + 2\ell - 1$.
The difficulty in computing $D(k,\ell)$ is that in our proof it corresponds to the largest root of a family of fifth-order polynomials.
However, two special cases are known.
First, the factorization~\cites{FeffermanGraham2012,Gover2006q} of the GJMS operators at Einstein manifolds implies that $D(k,0)=2k+1$.
Second, known results for the conformal Laplacian~\cite{LeeParker1987} and the Paneitz operator~\cite{GurskyMalchiodi2014} imply that $D(k,\ell) = 2k+2\ell-1$ for $k=1$ and $k=2$, respectively.
Also, our proof of \cref{positivity} implies that $D(3,1) \leq 48$;
see \cref{P6-remark} for further discussion.

We prove \cref{positivity} by showing that each factor~\eqref{eqn:defn-Dt} of $P_{2k}$ satisfies the Strong Maximum Principle for $D$ sufficiently large.
This follows by an iterative procedure as in Gursky and Malchiodi's proof~\cite{GurskyMalchiodi2014} that the Paneitz operator satisfies the Strong Maximum Principle under certain assumptions, but without passing through lower-order geometric curvatures;
see~\cref{baby-maximum-principle} for a precise statement and further discussion.

Combining \cref{positivity} with results of Qing and Raske~\cite{QingRaske2006}, Mazumdar~\cite{Mazumdar2016}, and Mazumdar and V\'etois~\cite{MazumdarVetois2020} yields conformal representatives realizing the $Q_{2k}$-Yamabe constant
\begin{equation}
 \label{eqn:yamabe}
 Y_{2k}(X^d,[g]) := \inf_{\hg \in [g]} \left\{ \int_X Q_{2k}^{\hg} \dvol_{\hg} \suchthat \Vol_{\hg}(X) = 1 \right\} .
\end{equation}

\begin{theorem}
 \label{solvability}
 Let $k,\ell$ be nonnegative integers.
 There is a constant $D = D(k,\ell)$ such that for every special Einstein product $(M^\ell \times N^{d-\ell}, g \oplus h)$ with $d \geq D$, there is a metric $\sigma \in [g \oplus h]$ such that
 \begin{align*}
  \int_{M \times N} Q_{2k}^\sigma \dvol_\sigma & = Y_{2k}(M^\ell \times N^{d-\ell}, [ g \oplus h ] ) , \\
  \Vol_{\sigma}(M \times N) & = 1 .
 \end{align*}
\end{theorem}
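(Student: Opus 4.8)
The plan is to deduce \cref{solvability} from \cref{positivity} by invoking the variational theory for the $Q_{2k}$-Yamabe problem as developed by Qing--Raske, Mazumdar, and Mazumdar--V\'etois. The essential analytic input is already supplied: by \cref{positivity}, for $d \geq D(k,\ell)$ the GJMS operator $P_{2k}$ on a closed special Einstein product $(M^\ell \times N^{d-\ell}, g \oplus h)$ satisfies the Strong Maximum Principle and has positive Green's function. This is exactly the coercivity/positivity hypothesis under which the direct method in the calculus of variations is known to produce a minimizer.

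First I would recall the variational setup: writing $\hg = u^{\frac{4}{d-2k}}(g \oplus h)$ for $u \in C^\infty(M \times N)$ positive and $n := d$, the $Q_{2k}$-Yamabe functional becomes, up to a dimensional constant,
\begin{equation*}
 \mathcal{F}(u) = \frac{\displaystyle\int_{M \times N} u \, P_{2k}^{g \oplus h} u \;\dvol_{g \oplus h}}{\left(\displaystyle\int_{M\times N} u^{\frac{2d}{d-2k}} \dvol_{g \oplus h}\right)^{\frac{d-2k}{d}}},
\end{equation*}
whose infimum over positive $u \in H^k(M\times N)$ equals $Y_{2k}(M^\ell\times N^{d-\ell},[g\oplus h])$ (after normalizing the total $Q$-curvature term; note $d > 2k$ is automatic once $d \geq D(k,\ell)\geq 2k+2\ell-1 \geq 2k-1$, and strict inequality $d>2k$ holds for $D$ large). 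Positivity of the Green's function of $P_{2k}$ ensures the quadratic form $\langle u, P_{2k}u\rangle$ is coercive on $H^k$ modulo the kernel, and in fact the Strong Maximum Principle guarantees the kernel is trivial on a closed manifold, so $\langle\,\cdot\,,P_{2k}\,\cdot\,\rangle$ is equivalent to the $H^k$ inner product.

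Next I would carry out the minimization. The key comparison, as in Qing--Raske and Mazumdar--V\'etois, is the strict inequality
\begin{equation*}
 Y_{2k}(M^\ell \times N^{d-\ell},[g\oplus h]) < Y_{2k}(S^d,[g_{\mathrm{round}}]),
\end{equation*}
which rules out concentration (bubbling) of a minimizing sequence and thereby upgrades weak convergence to strong convergence in $H^k$. On a special Einstein product this strict inequality follows from a test-function computation: inserting a standard bubble concentrated at a point $p \in M\times N$ and using that the local geometry near $p$ is that of a product of nonflat Einstein factors (so the Weyl tensor, or more precisely the relevant curvature correction terms in the local expansion of $P_{2k}$, contribute with the favorable sign once $d$ is large). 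With the strict inequality in hand, a minimizing sequence $u_i$ for $\mathcal{F}$, normalized by $\int u_i^{2d/(d-2k)}\dvol = 1$, is bounded in $H^k$; passing to a weak limit $u_\infty \geq 0$, the absence of bubbling gives $u_i \to u_\infty$ strongly, so $u_\infty$ is a nonnegative minimizer with $\int u_\infty^{2d/(d-2k)}\dvol = 1$. The Euler--Lagrange equation reads $P_{2k}^{g\oplus h} u_\infty = c\, u_\infty^{\frac{d+2k}{d-2k}}$ for a constant $c$ of the sign of $Y_{2k}$; since $P_{2k}u_\infty \geq 0$ and $u_\infty \not\equiv 0$, the Strong Maximum Principle forces $u_\infty > 0$ everywhere. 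Finally, elliptic regularity bootstraps $u_\infty \in C^\infty(M\times N)$, and $\sigma := u_\infty^{\frac{4}{d-2k}}(g\oplus h)$ is the desired smooth conformal representative realizing the $Q_{2k}$-Yamabe constant, normalized to unit volume by the constraint $\int u_\infty^{2d/(d-2k)}\dvol = 1 = \Vol_\sigma(M\times N)$.

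The main obstacle is the strict-inequality step $Y_{2k}(M^\ell\times N^{d-\ell},[g\oplus h]) < Y_{2k}(S^d)$: one must verify that the test-function expansion has the right sign, which is precisely where the hypothesis ``$d$ large'' (i.e.\ $d\geq D(k,\ell)$) is used, in tandem with the structure of a special Einstein product. In fact one expects the very estimates from the proof of \cref{positivity} — control of the factors $D_t$ in \cref{eqn:defn-Dt} for $d$ beyond the largest root of the relevant fifth-order polynomials — to feed directly into the sign of the bubble expansion, so that enlarging $D(k,\ell)$ if necessary makes both \cref{positivity} and the strict inequality hold simultaneously. Once that is secured, the remainder is the standard concentration-compactness machinery of Qing--Raske, Mazumdar, and Mazumdar--V\'etois applied verbatim, together with the Strong Maximum Principle from \cref{positivity} to guarantee positivity and hence smoothness of the minimizer.
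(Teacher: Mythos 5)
Your overall skeleton matches the paper's: reduce to (i) the strict inequality $Y_{2k}(M^\ell\times N^{d-\ell},[g\oplus h]) < Y_{2k}(S^d,[\grd])$, (ii) positivity of $P_{2k}$ and of its Green's function (supplied by \cref{factorization} and \cref{positivity}), and then (iii) invoke the existence machinery of Qing--Raske, Mazumdar, and Mazumdar--V\'etois. The gap is in step (i). You propose to get the strict inequality by a single Aubin-type bubble expansion, asserting that ``the local geometry near $p$ is that of a product of nonflat Einstein factors, so the Weyl tensor\dots contributes with the favorable sign.'' This fails for the locally conformally flat special Einstein products --- precisely the products of hyperbolic and spherical space forms such as $H^\ell\times S^{d-\ell}$, which are the paper's motivating examples. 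These are products of nonflat Einstein factors whose Weyl tensor vanishes identically, so the leading curvature correction in the test-function expansion is zero and the bubble computation gives no strict inequality. Your fallback suggestion, that the estimates from the proof of \cref{positivity} (which control the maximum principle for the factors $D_t$) would ``feed into the sign of the bubble expansion,'' does not work either: those estimates have no bearing on the Yamabe-constant comparison.

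The paper therefore splits into two cases. When $g\oplus h$ is not locally conformally flat, it uses the higher-order Aubin test functions of Mazumdar--V\'etois (requiring $d\geq 2k+4$ so that the Weyl tensor is the leading term), which is essentially your argument. When $g\oplus h$ is locally conformally flat, it proves a positive-mass-type statement (\cref{positive-mass}): the universal cover is $\bigl(\bR^d\setminus\bR^{\ell-1},\lv y\rv^{-2}(dx^2\oplus dy^2)\bigr)$, the Green's function of $P_{2k}$ is obtained by summing the explicit Euclidean fundamental solution over the deck group (convergent because the Poincar\'e exponent satisfies $\delta(\pi_1(X))=\ell-1<\frac{d-2k}{2}$), and in conformal normal coordinates it has a strictly positive constant term $\mu(\zeta)>0$ in its expansion at the pole; by Mazumdar--V\'etois this yields the strict inequality. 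Without this second branch your proof does not cover the locally conformally flat case, so as written it is incomplete. A secondary, minor point: coercivity of the quadratic form is obtained in the paper from $P_{2k}>0$ via \cref{factorization}, not from the Strong Maximum Principle alone, which by itself only gives triviality of the kernel.
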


In other words, the $Q_{2k}$-Yamabe Problem is solvable on special Einstein products provided $d$ is sufficiently large relative to $k$ and $\ell$.
We expect that the best constant from \cref{positivity} is equal to the best constant from \cref{solvability}, though our proof of \cref{solvability} requires also $D \geq 2k + 4$.
This is because we prove the estimate $Y_{2k}(M^\ell \times N^{d-\ell} , [ g \oplus h ] ) < Y_{2k}(S^d,[\grd])$ through the usual split into the locally conformally flat and non-locally conformally flat cases (cf.\ \cites{Aubin1976,Schoen1984,LeeParker1987,MazumdarVetois2020}).
In the case of locally conformally flat manifolds, results of Qing and Raske~\cite{QingRaske2006} and Mazumdar~\cite{Mazumdar2016} imply that the $Q_{2k}$-Yamabe problem is solvable on special Einstein manifolds with $d \geq 2k+2\ell-1$.
Since we lack a positive mass theorem, the remaining case requires $d \geq 2k+4$ so that the Weyl tensor is the leading term in the asymptotic expansion~\cite{MazumdarVetois2020} of the Green's function of the GJMS operator $P_{2k}$.

Andrade et al.~\cite{AndradeCasePiccioneWei2023} use \cref{solvability} and a covering argument to show that for each $k,m \in \bN$, there is a special Einstein product $(M^\ell \times N^{d-\ell}, g \oplus h)$, $d \geq D(k,\ell)$, which admits at least $m$ pairwise nonhomothetic metrics in the conformal class $[g \oplus h]$ of constant $Q_{2k}$-curvature.
A continuity argument of Bettiol, Gonz\'alez, and Maalaoui~\cite{BettiolGonzalezMaalaoui2023} implies that the same conclusion is true for the fractional $Q_{2\gamma}$-curvature for $\gamma$ sufficiently close to an integer.
Thus the nonuniqueness of solutions to the Yamabe problem is not a phenomenon of only the low-order $Q$-curvatures.

This article is organized as follows.

In \cref{sec:ambient} we recall the explicit ambient metric for a special Einstein product and relate it to the GJMS construction.

In \cref{sec:proof} we derive various formulas for the GJMS operators of a special Einstein product.
In particular, we prove \cref{factorization}.

In \cref{sec:maximum} we prove \cref{positivity,solvability}.

\section{The ambient space of a special Einstein product}
\label{sec:ambient}

Let $(X^d,g)$ be a Riemannian manifold.
There is~\cite{FeffermanGraham2012}*{Proposition~2.6 and Theorem~2.9} a unique \defn{straight and normal ambient metric}
\begin{equation*}
 \cg = 2\rho \, dt^2 + 2t \, dt \, d\rho + t^2g_\rho
\end{equation*}
on $\cmG := (0,\infty)_t \times M \times (-\varepsilon,\varepsilon)_\rho$ such that
\begin{enumerate}
 \item $\Ric_{\cg} = O(\rho^\infty)$, if $d$ is odd, and
 \item $\Ric_{\cg} = O^+(\rho^{(d-2)/2})$, if $d$ even,
\end{enumerate}
where $O^+(\rho^k)$ denotes the space of symmetric $(0,2)$-tensor fields $T$ on $\cmG$ such that $T \in O(\rho^k)$ and $g^{ij}T_{ij} \in O(\rho^{k+1})$, where $T_{ij}$, $g_{ij}$, and $g^{ij}$ denote the restrictions of $T$, $g_\rho$, and $g_\rho^{-1}$, respectively, to (co)vectors tangent to $\{ t \} \times M \times \{\rho\}$.
Uniqueness is modulo $O(\rho^\infty)$ if $d$ is odd, and modulo $O^+(\rho^{d/2})$ if $d$ is even.
Moreover, if $\hg := e^{2u}g$ is a conformal rescaling of $g$ and $\widetilde{\widehat{g}}$ is the straight and normal ambient metric for $\hg$, then~\cite{FeffermanGraham2012}*{Theorem~2.3} there is a diffeomorphism $\Phi \in \Diff(\cmG)$ such that $\Phi\rv_{\rho=0}=\Id$ and $\Phi^\ast \widetilde{\widehat{g}} \equiv \cg$ modulo the indeterminancy of a straight and normal ambient metric.

Let $(\cmG,\cg)$ be the straight and normal ambient space for $(X^d,g)$.
Let $k \in \bN$;
if $d$ is even, assume additionally that $k \leq d/2$.
Given $u \in C^\infty(X)$, let $\cu = \cu(x,\rho)$ be such that $\cu(\cdot,0)=u$.
Then $\cu$ is an \defn{extension} of $u$ to $\cmG$.
Graham, Jenne, Mason and Sparling proved~\cite{GJMS1992}*{Proposition~2.1} that the GJMS operator
\begin{equation}
 \label{eqn:defn-gjms}
 P_{2k}^gu := \left. (-\Delta_{\cg})^k\left( t^{-\frac{d-2k}{2}}\cu \right) \right|_{t=1,\rho=0}
\end{equation}
is independent of the choice of extension.
In particular, $P_{2k} \colon C^\infty(X) \to C^\infty(X)$ satisfies the conformal transformation law~\eqref{eqn:conformally-covariant}.
It is known~\cite{GJMS1992}*{Equation~(3.5)} that if $\cu=\cu(x,\rho)$ and $w \in \bR$, then
\begin{equation}
 \label{eqn:general-laplace-formula}
 \Delta_{\cg}(t^w\cu) = t^{w-2}\left( -2\rho \cu^{\prime\prime} + (d + 2w - 2 - \rho g^{ij}g_{ij}^\prime)\cu^\prime + \Delta_{g_\rho} u + \frac{w}{2}g^{ij}g_{ij}^\prime \cu \right) ,
\end{equation}
where primes denote derivatives with respect to $\rho$.
Thus $P_{2k}^g \equiv (-\Delta_g)^k$ modulo lower-order terms.
The dependence of $P_{2k}^g$ on the Taylor expansion of $g_\rho$ in Equation~\eqref{eqn:general-laplace-formula} is responsible for the restriction $k \leq d/2$ when $d$ is even.

\begin{remark}
 \label{lcf-remove-restriction}
 The GJMS operator $P_{2k}$ cannot~\cites{Graham1992,GoverHirachi2004} in general be defined on a manifold of even dimension $d < 2k$.
 However, it can be defined to all orders on conformally Einstein~\cites{Gover2006q,FeffermanGraham2012} and on locally conformally flat~\cites{FeffermanGraham2012,Branson1995} manifolds.
 The point is that, in these situations, one can canonically define~\cite{FeffermanGraham2012}*{Propositions~7.1 and~7.5} the ambient metric modulo $O(\rho^\infty)$.
 In particular, \cref{factorization} is true for all $k \in \bN$ in the case of a special Einstein product of two locally conformally flat manifolds.
\end{remark}

While it is generally difficult to compute the straight and normal ambient metric for a given Riemannian manifold, there are some cases~\cites{FeffermanGraham2012,GoverLeitner2009,AndersonLeistnerNurowski2020,Nurowski2008,LeistnerNurowski2010,LeistnerNurowski2012} where this has been done.
Of relevance to us is the identification~\cite{GoverLeitner2009}*{Theorem~2.1} of the straight and normal ambient metric $(\cmG,\cg)$ of the special Einstein product $(M^\ell \times N^{d-\ell}, g \oplus h)$.
For notational convenience, set $\mu := \lambda/2$.
Then
\begin{equation}
 \label{eqn:special-ambient}
 \cg = 2\rho \, dt^2 + 2t \, dt \, d\rho + t^2 \left( (1 - \mu\rho )^2g + (1 + \mu\rho )^2 h \right) .
\end{equation}
Specializing Equation~\eqref{eqn:general-laplace-formula} to the ambient metric~\eqref{eqn:special-ambient} yields
\begin{multline}
 \label{eqn:special-laplacian}
 \Delta_{\cg}(t^w\cu) = t^{w-2}\Bigl( -2\rho\cu^{\prime\prime} + \Bigl(d+2w-2 + \frac{2\ell\mu\rho}{1 - \mu\rho} - \frac{2(d-\ell)\mu\rho}{1 + \mu\rho} \Bigr)\cu^\prime \\
  + (1-\mu\rho)^{-2}\Delta_g\cu + (1+\mu\rho)^{-2}\Delta_h\cu - \Bigl(\frac{\ell w\mu}{1-\mu\rho} - \frac{(d-\ell)w\mu}{1+\mu\rho}\Bigr)\cu  \Bigr)
\end{multline}
for all $w \in \bR$ and all $\cu = \cu(x,\rho)$.
Equation~\eqref{eqn:special-laplacian} simplifies for certain choices of extension $\cu$ of $u$ (cf.\ \citelist{ \cite{Matsumoto2013}*{Lemma~4.1} \cite{Khaitan2022}*{Section~5} }).

\begin{lemma}
 \label{key-laplacian-lemma}
 Let $(M^\ell \times N^{d-\ell}, g \oplus h)$ be a special Einstein product.
 Given $w,s \in \bR$ and $u \in C^\infty(M \times N)$, define $\cu \in C^\infty(\cmG)$ by
 \begin{equation*}
  \cu(t,x,\rho) := (1+\mu\rho)^s(1-\mu\rho)^{w-s}u(x) .
 \end{equation*}
 Then
 \begin{align*}
  \Delta_{\cg} (t^w\cu) & = t^{w-2}(1+\mu\rho)^{s-2}(1-\mu\rho)^{w-s}\left( \Delta_h + 2s(d-\ell+s-1)\mu\right)u \\
   & \quad + t^{w-2}(1+\mu\rho)^s(1-\mu\rho)^{w-s-2}\left(\Delta_g - 2(w-s)(\ell+w-s-1)\mu \right)u ,
 \end{align*}
 where we identify $u(t,x,\rho) := u(x)$.
\end{lemma}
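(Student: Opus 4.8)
The plan is to apply the general ambient Laplacian formula~\eqref{eqn:special-laplacian} directly to the specific extension $\cu$, using the multiplicative structure to simplify each term. First I would compute the radial derivatives. Writing $\cu = (1+\mu\rho)^s(1-\mu\rho)^{w-s}u(x)$, the derivative $\cu^\prime$ produces the factor $\bigl( \frac{s\mu}{1+\mu\rho} - \frac{(w-s)\mu}{1-\mu\rho} \bigr)\cu$, and $\cu^{\prime\prime}$ produces a second-order expression in $\rho$; I would keep these in the form where $(1\pm\mu\rho)^{-1}$ appears multiplying $\cu$, since those are exactly the structures appearing in~\eqref{eqn:special-laplacian}.

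Next I would substitute everything into~\eqref{eqn:special-laplacian} and collect. Two things happen. The terms $(1-\mu\rho)^{-2}\Delta_g\cu$ and $(1+\mu\rho)^{-2}\Delta_h\cu$ are already of the desired shape: the first contributes $t^{w-2}(1+\mu\rho)^s(1-\mu\rho)^{w-s-2}\Delta_g u$ and the second $t^{w-2}(1+\mu\rho)^{s-2}(1-\mu\rho)^{w-s}\Delta_h u$, matching the two lines of the claim. The remaining terms — those coming from $-2\rho\cu^{\prime\prime}$, from the coefficient of $\cu^\prime$, and from the zeroth-order $\cu$ term in~\eqref{eqn:special-laplacian} — are all proportional to $u$ (no Laplacians), so it suffices to verify an identity among scalar rational functions of $\rho$. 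The key step is to show that, after multiplying through by $(1+\mu\rho)^2(1-\mu\rho)^2$, all these contributions collapse to exactly
\[
  2s(d-\ell+s-1)\mu \,(1+\mu\rho)^{s-2}(1-\mu\rho)^{w-s} u
  \; - \; 2(w-s)(\ell+w-s-1)\mu\,(1+\mu\rho)^s(1-\mu\rho)^{w-s-2} u,
\]
i.e. that the coefficient of $u$ splits into a piece with a pole only at $\rho = -1/\mu$ (with the stated residue-type coefficient) and a piece with a pole only at $\rho = 1/\mu$. This is a partial-fractions computation: one groups the contributions into those carrying the factor $(1+\mu\rho)^{-1}$ or $(1+\mu\rho)^{-2}$ and those carrying $(1-\mu\rho)^{-1}$ or $(1-\mu\rho)^{-2}$, checks that the ``cross'' terms (e.g. the $\frac{s\mu}{1+\mu\rho}\cdot\frac{-(w-s)\mu}{1-\mu\rho}$ term from $\cu^{\prime\prime}$, and the mixed pieces of the $\cu^\prime$ coefficient times $\cu^\prime$) cancel, and evaluates the residues at $\mu\rho = \mp 1$ to read off the coefficients $2s(d-\ell+s-1)\mu$ and $-2(w-s)(\ell+w-s-1)\mu$.

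The main obstacle is purely bookkeeping: the expression $-2\rho\cu^{\prime\prime}$ expands, via $\rho = \frac{1}{2\mu}\bigl((1+\mu\rho) - (1-\mu\rho)\bigr)$, into four terms of mixed pole order, and one must carefully track how these combine with the $\rho$-dependent part of the $\cu^\prime$-coefficient and with the explicit $\cu$-term $-\bigl(\frac{\ell w\mu}{1-\mu\rho} - \frac{(d-\ell)w\mu}{1+\mu\rho}\bigr)\cu$. A clean way to organize this is to first verify the claimed formula at $\rho = 0$ (which fixes the constant term and is a quick check), then differentiate both sides in $\rho$ and match; alternatively, substitute $w = s$ and $s = 0$ as sanity checks, since in those degenerate cases one factor drops out. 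Once the scalar identity is confirmed, the lemma follows immediately since both sides of~\eqref{eqn:special-laplacian} are then term-by-term equal, and one verifies that no Taylor-truncation issue arises because $\cu$ is defined for all $\rho$ in $(-\eps,\eps)$ with $\eps$ small enough that $1\pm\mu\rho$ never vanishes.
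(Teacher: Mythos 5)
Your proposal is correct and follows essentially the same route as the paper: compute $\cu'$ and $\cu''$ for the given extension, substitute into Equation~\eqref{eqn:special-laplacian}, and simplify the resulting rational functions of $\rho$ until only the two stated terms survive. The partial-fraction bookkeeping you describe (using $\frac{\mu\rho}{1\pm\mu\rho} = \mp 1 \pm \frac{1}{1\pm\mu\rho}$ to eliminate the cross terms) is exactly the ``simplifying'' step the paper leaves implicit, and it does check out.
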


\begin{proof}
 Direct computation gives
 \begin{align*}
  \cu^\prime & = s\mu(1+\mu\rho)^{s-1}(1-\mu\rho)^{w-s}u - (w-s)\mu(1+\mu\rho)^s(1-\mu\rho)^{w-s-1}u , \\
  \cu^{\prime\prime} & = s(s-1)\mu^2(1+\mu\rho)^{s-2}(1-\mu\rho)^{w-s}u \\
   & \quad - 2s(w-s)\mu^2(1+\mu\rho)^{s-1}(1-\mu\rho)^{w-s-1}u \\
   & \quad + (w-s)(w-s-1)\mu^2(1+\mu\rho)^s(1-\mu\rho)^{w-s-2}u .
 \end{align*}
 Inserting this into Equation~\eqref{eqn:special-laplacian} and simplifying yields the claimed formula.
\end{proof}

\section{The factorization formula}
\label{sec:proof}

Iterating \cref{key-laplacian-lemma} yields our first formula for the GJMS operators of a special Einstein product.

\begin{proposition}
 \label{main-thm}
 Let $(M^\ell \times N^{d-\ell}, g \oplus h)$ be a special Einstein product.
 Let $k \in \bN$ and $s \in \bR$;
 if $d$ is even, assume additionally that $k \leq \frac{d}{2}$.
 Then
 \begin{equation}
  \label{eqn:main-factorization}
  P_{2k}^{g \oplus h} = \sum_{j=0}^k \binom{k}{j} D^{g,\ell,-\lambda}_{k-j,-\frac{d}{2}+k-s} D^{h,d-\ell,\lambda}_{j,s} ,
 \end{equation}
 where
 \begin{equation}
  \label{eqn:D}
  D_{j,s}^{\gamma,m,\mu} := \prod_{t=0}^{j-1} \left( -\Delta_\gamma + (2t-s)(m+s-2t-1)\mu \right) ,
 \end{equation}
 with the convention that the empty product is multiplication by $1$.
\end{proposition}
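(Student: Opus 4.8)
The GJMS operator is defined by $P_{2k}^{g \oplus h} u = \left. (-\Delta_{\cg})^k (t^{-\frac{d-2k}{2}} \cu)\right|_{t=1,\rho=0}$ for any extension $\cu$ of $u$. The strategy is to choose the family of extensions supplied by \cref{key-laplacian-lemma}, apply $-\Delta_{\cg}$ repeatedly, and keep track of the fact that at each step $-\Delta_{\cg}$ sends a function of the form $t^{w}(1+\mu\rho)^{s}(1-\mu\rho)^{w-s} u(x)$ to a sum of two terms of the \emph{same shape}, one with $w$ decreased by $2$ and $s$ decreased by $0$ (the ``$h$-term''), one with $w$ decreased by $2$ and $s$ decreased by $1$ (the ``$g$-term''), with $u$ replaced in each case by an explicit second-order operator applied to $u$. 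Since $\mu = \lambda/2$, it is convenient to rewrite the two operators from \cref{key-laplacian-lemma} in the notation of~\eqref{eqn:D}: the $h$-term carries $\tfrac12(\Delta_h + 2s(d-\ell+s-1)\mu) = \tfrac12(-(-\Delta_h) + (2\cdot 0 - (-s))((d-\ell)+(-s) - 2\cdot 0 - 1)\lambda)$ up to the sign conventions, i.e.\ it is (one factor of) $D^{h,d-\ell,\lambda}_{\bullet,s}$, while the $g$-term is (one factor of) $D^{g,\ell,-\lambda}_{\bullet,w-s}$ with the running exponent $w$ tracked separately.

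\textbf{Key steps.} First I would fix $s \in \bR$, set $w_0 := -\frac{d-2k}{2}$, and take the extension $\cu_0(t,x,\rho) = (1+\mu\rho)^{s}(1-\mu\rho)^{w_0 - s} u(x)$, so that $t^{w_0}\cu_0 = t^{w_0}(1+\mu\rho)^{s}(1-\mu\rho)^{w_0-s}u$. Second, I would prove by induction on $m$ (for $0 \le m \le k$) the identity
\begin{equation*}
 (-\Delta_{\cg})^{m}\!\left(t^{w_0}\cu_0\right) = \sum_{j=0}^{m} \binom{m}{j} t^{w_0 - 2m}(1+\mu\rho)^{s-j}(1-\mu\rho)^{w_0 - s - (m-j)} \left( D^{g,\ell,-\lambda}_{m-j,\,w_0 - s}\, D^{h,d-\ell,\lambda}_{j,\,s}\, u \right),
\end{equation*}
the key combinatorial point being that applying $-\Delta_{\cg}$ one more time, via \cref{key-laplacian-lemma} applied with the current exponents, turns each summand into two summands and the binomial coefficients add by Pascal's rule. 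One must check that the second-order factor produced at the step decreasing $s$ from $s-j$ matches the next factor of $D^{h,d-\ell,\lambda}_{j+1,s}$ (namely $-\Delta_h + (2j - s)(d-\ell+s-2j-1)\lambda$ up to the factor of $\tfrac12$ absorbed into $\mu$) and similarly for the $g$-side, with the $g$-exponent $w$ having dropped by $2$ at each of the $m-j$ earlier $g$-steps — so the ``$w$'' appearing in $D^{g,\ell,-\lambda}$ is $w_0 - s$, \emph{not} the current $w$, because the factors of $(1-\mu\rho)$ are what get differentiated. Third, set $m = k$, put $t=1$ and $\rho=0$: all the $(1\pm\mu\rho)$ powers become $1$, $t^{w_0-2k}=1$, and one reads off $P_{2k}^{g\oplus h} u = \sum_{j=0}^k \binom{k}{j} D^{g,\ell,-\lambda}_{k-j,\,w_0-s}\, D^{h,d-\ell,\lambda}_{j,s}\, u$. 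Finally, note $w_0 - s = -\tfrac{d}{2}+k-s$, which is exactly the subscript in~\eqref{eqn:main-factorization}, and that the two composed operators commute (they act on the separate factors $M$ and $N$), so the order in the statement is immaterial. Independence of the right-hand side from $s$ is automatic since the left-hand side is independent of the extension; alternatively it can be seen directly as a telescoping/reindexing identity, but invoking the well-definedness of $P_{2k}$ from~\cite{GJMS1992} is cleanest.

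\textbf{Main obstacle.} The only real subtlety is bookkeeping: making sure the arguments $(2t - s)$ and $(m+s-2t-1)$ inside the products~\eqref{eqn:D} come out with exactly the right running indices after $j$ applications of the $h$-type step (which lowers the $(1+\mu\rho)$-exponent from $s$ down through $s-1, s-2,\dots$) and $k-j$ applications of the $g$-type step (which lowers the $(1-\mu\rho)$-exponent). In particular one must be careful that in \cref{key-laplacian-lemma} the coefficient multiplying $u$ in the $h$-term, $2s(d-\ell+s-1)\mu$, is evaluated at the \emph{current} $(1+\mu\rho)$-exponent (call it $s'$), giving $2s'(d-\ell+s'-1)\mu$, and that as $s'$ runs over $s, s-1, \dots, s-j+1$ this reproduces $\prod_{t=0}^{j-1}\big(-\Delta_h + (2t-s)(d-\ell+s-2t-1)\lambda\big)$ after substituting $s' = s-t$ and $\mu=\lambda/2$; similarly the $g$-coefficient $-2(w-s)(\ell+w-s-1)\mu$, with $w-s$ the current $(1-\mu\rho)$-exponent running over $w_0-s, w_0-s-1,\dots$, reproduces $D^{g,\ell,-\lambda}_{k-j,\,w_0-s}$ (the sign flip $\mu \mapsto -\lambda/2$ matching the ``$-\lambda$'' superscript). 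Once the indexing is lined up, the induction is immediate from \cref{key-laplacian-lemma} and Pascal's identity.
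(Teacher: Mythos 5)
Your overall strategy is exactly the paper's: take the two-parameter family of extensions from \cref{key-laplacian-lemma}, iterate $-\Delta_{\cg}$ using the facts that each application preserves the shape $t^{w}(1+\mu\rho)^{\sigma}(1-\mu\rho)^{w-\sigma}v$ and that $\Delta_g$ and $\Delta_h$ commute, collect binomial coefficients via Pascal's rule, and evaluate at $t=1$, $\rho=0$. However, the bookkeeping — which you rightly identify as the entire content of the argument beyond \cref{key-laplacian-lemma} — is carried out incorrectly, and as written your induction step fails. The lemma shows that the $h$-term lowers the $(1+\mu\rho)$-exponent by $2$ (from $\sigma$ to $\sigma-2$, leaving $w-\sigma$ fixed), while the $g$-term leaves $\sigma$ fixed and lowers $w-\sigma$ by $2$; your plan paragraph has these roles swapped and the decrements equal to $0$ and $1$, and your induction hypothesis carries exponents $(1+\mu\rho)^{s-j}(1-\mu\rho)^{w_0-s-(m-j)}$ against $t^{w_0-2m}$. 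Since $(s-j)+\bigl(w_0-s-(m-j)\bigr)=w_0-m\neq w_0-2m$, those terms are not of the form $t^{w}(1+\mu\rho)^{\sigma}(1-\mu\rho)^{w-\sigma}v$ required to apply \cref{key-laplacian-lemma} again, so the induction cannot proceed. The correct exponents are $(1+\mu\rho)^{s-2j}(1-\mu\rho)^{w_0-s-2(m-j)}$, which do sum to $w_0-2m$.

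The same factor-of-two error corrupts your identification of the second-order factors. You substitute $s'=s-t$ into the lemma's coefficient $2s'(d-\ell+s'-1)\mu$, which yields $-\Delta_h+(t-s)(d-\ell+s-t-1)\lambda$ for the $t$-th factor; this agrees with the factor $-\Delta_h+(2t-s)(d-\ell+s-2t-1)\lambda$ of $D^{h,d-\ell,\lambda}_{j,s}$ only at $t=0$. The correct substitution is $s'=s-2t$, for which $-\bigl(\Delta_h+2(s-2t)(d-\ell+s-2t-1)\mu\bigr)=-\Delta_h+(2t-s)(d-\ell+s-2t-1)\lambda$ matches definition~\eqref{eqn:D} exactly; similarly on the $g$-side the running $(1-\mu\rho)$-exponent is $w_0-s-2t$, and the sign flip $\mu\mapsto-\lambda$ works as you describe. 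With these corrections your induction closes and the proof coincides with the paper's. The remaining ingredients — evaluation at $t=1$, $\rho=0$, the identification $w_0-s=-\tfrac{d}{2}+k-s$, and independence of $s$ via the well-definedness of $P_{2k}$ from~\eqref{eqn:defn-gjms} — are fine.
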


\begin{proof}
 Let $u \in C^\infty(M \times N)$.
 Set $\mu := \lambda/2$.
 Consider the extension
 \begin{equation*}
  \cu := (1+\mu\rho)^s(1-\mu\rho)^{w-s}u , \qquad w := -\frac{d-2k}{2} .
 \end{equation*}
 Combining \cref{key-laplacian-lemma} with the fact that $\Delta_g$ and $\Delta_h$ commute yields
 \begin{equation*}
  (-\Delta_{\cg})^k(t^w\cu) = t^{w-2k}\sum_{j=0}^k \binom{k}{j}(1+\mu\rho)^{s-2j}(1-\mu\rho)^{w-s-2k+2j}D^{g,\ell,-2\mu}_{k-j,w-s}D^{h,d-\ell,2\mu}_{j,s}u .
 \end{equation*}
 The final conclusion follows from the definition~\eqref{eqn:defn-gjms} of $P_{2k}$.
\end{proof}

The parameter $s$ in \cref{main-thm} is arbitrary;
any other choice will recover the same operator.
Since $D_{j,0}^{\gamma,d,\mu}$, $j \geq 1$, annihilates constants, choosing $s = 0$ and restricting Equation~\eqref{eqn:main-factorization} to functions which depend only on $M$ recovers the factorization~\eqref{eqn:case-chang-factorization}.

The rest of this section requires some functional equations satisfied by the Gamma function $\Gamma$.
The first are the consequences
\begin{align}
 \label{eqn:basic-identities}
 \frac{\Gamma(x+m)}{\Gamma(x)} & = \prod_{t=0}^{m-1} (x+t) , & \frac{\Gamma(x+1)}{\Gamma(x-m+1)} & = \prod_{t=0}^{m-1} (x-t)
\end{align}
of the fundamental identity $z\Gamma(z) = \Gamma(z+1)$.
The second is  Euler's reflection formula~\cite{NIST:DLMF}*{Equation~(5.5.3)}
\begin{equation*}
 \Gamma(z)\Gamma(1-z) = \frac{\pi}{\sin(\pi z)} .
\end{equation*}
In particular, if $m \in \bZ$, then
\begin{equation}
 \label{eqn:euler}
 \frac{\Gamma(1-z)}{\Gamma(1-m-z)} = (-1)^m\frac{\Gamma(z+m)}{\Gamma(z)} .
\end{equation}
The former identities~\eqref{eqn:basic-identities} allow us to express the operator~\eqref{eqn:D} as a ratio of Gamma functions.

\begin{lemma}
 \label{Gamma}
 Let $\gamma$ be a Riemannian metric on an $m$-dimensional manifold, let $s \in \bR$, let $\mu \in \bR \setminus \{0\}$, and let $j \in \bN_0$.
 Then
 \begin{equation*}
  D_{j,s}^{\gamma,m,\mu} = (4\mu)^j\frac{\Gamma\bigl( \frac{1}{2\sqrt{\mu}}B + \frac{4j-m-2s+1}{4} \bigr)}{\Gamma\bigl( \frac{1}{2\sqrt{\mu}}B - \frac{m+2s-1}{4} \bigr)}\frac{\Gamma\bigl( \frac{1}{2\sqrt{\mu}}B + \frac{m+2s+3}{4} \bigr)}{\Gamma\bigl( \frac{1}{2\sqrt{\mu}}B - \frac{4j - m - 2s - 3}{4} \bigr)} ,
 \end{equation*}
 where
 \begin{equation*}
  B := \sqrt{ -\Delta_\gamma + \left( \frac{m-1}{2} \right)^2\mu } ,
 \end{equation*}
 and we adopt the convention $\sqrt{-a} := i\sqrt{a}$ for $a>0$.
\end{lemma}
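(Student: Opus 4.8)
The plan is to peel the right-hand side apart factor by factor, reducing everything to the two elementary product identities in \eqref{eqn:basic-identities}. The crucial observation is that, written in terms of the operator $B$, each factor of $D_{j,s}^{\gamma,m,\mu}$ is a difference of two squares.

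\emph{Step 1 (completing the square).} Since $B^2 = -\Delta_\gamma + \bigl(\tfrac{m-1}{2}\bigr)^2\mu$, the $t$-th factor in \eqref{eqn:D} equals
\begin{equation*}
 -\Delta_\gamma + (2t-s)(m+s-2t-1)\mu = B^2 - \Bigl[ \bigl(\tfrac{m-1}{2}\bigr)^2 - (2t-s)(m+s-2t-1) \Bigr]\mu .
\end{equation*}
Writing $m+s-2t-1 = (m-1)-(2t-s)$ and expanding, one checks the algebraic identity
\begin{equation*}
 \bigl(\tfrac{m-1}{2}\bigr)^2 - (2t-s)(m+s-2t-1) = \bigl(2t-s-\tfrac{m-1}{2}\bigr)^2 ,
\end{equation*}
so the $t$-th factor is $B^2 - \bigl(2t-s-\tfrac{m-1}{2}\bigr)^2\mu$.

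\emph{Step 2 (rescaling and taking the product).} I would set $\beta := \tfrac{1}{2\sqrt\mu}B$, so that the $t$-th factor becomes $4\mu\,(\beta-c_t)(\beta+c_t)$ with $c_t := t - \tfrac{s}{2} - \tfrac{m-1}{4} = c_0 + t$ and $c_0 := -\tfrac{m+2s-1}{4}$. Multiplying over $0 \le t \le j-1$ gives
\begin{equation*}
 D_{j,s}^{\gamma,m,\mu} = (4\mu)^j \Bigl( \prod_{t=0}^{j-1}(\beta + c_0 + t) \Bigr)\Bigl( \prod_{t=0}^{j-1}(\beta - c_0 - t) \Bigr) .
\end{equation*}
Applying the first identity in \eqref{eqn:basic-identities} with $x = \beta + c_0$ (and $j$ in place of $m$) to the first product, and the second with $x = \beta - c_0$ to the second product, yields $(4\mu)^j \tfrac{\Gamma(\beta + c_0 + j)}{\Gamma(\beta + c_0)}\cdot\tfrac{\Gamma(\beta - c_0 + 1)}{\Gamma(\beta - c_0 - j + 1)}$. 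A one-line arithmetic check —
$\beta + c_0 + j = \beta + \tfrac{4j-m-2s+1}{4}$, $\beta + c_0 = \beta - \tfrac{m+2s-1}{4}$, $\beta - c_0 + 1 = \beta + \tfrac{m+2s+3}{4}$, $\beta - c_0 - j + 1 = \beta - \tfrac{4j-m-2s-3}{4}$ — identifies these with the four Gamma arguments in the statement, completing the proof (the case $j=0$ being the identity on both sides).

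There is no substantive obstacle here; the only point worth a remark is the meaning of the right-hand side as an operator. I would note that $B$ is defined by functional calculus for $-\Delta_\gamma$, with the convention $\sqrt{-a} := i\sqrt a$ needed when $\mu<0$ and $\bigl(\tfrac{m-1}{2}\bigr)^2\mu$ is exceeded by an eigenvalue; that each individual Gamma ratio is to be interpreted via \eqref{eqn:basic-identities} as a polynomial in $\beta$; and that only the \emph{product} of the two ratios is a polynomial in $\beta^2$, hence in $-\Delta_\gamma$ — the odd powers of $\beta$ cancelling between the two factors — so the formula is unambiguous and independent of the sign of the square root. The hypotheses $\mu\neq 0$ and $j\in\bN_0$ are precisely what make $\beta$ and the displayed finite products meaningful.
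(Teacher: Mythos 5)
Your proof is correct and follows essentially the same route as the paper's: rewrite each factor of $D_{j,s}^{\gamma,m,\mu}$ as $4\mu\bigl(\beta^2 - (t - \tfrac{m+2s-1}{4})^2\bigr)$ with $\beta = \tfrac{1}{2\sqrt{\mu}}B$ by completing the square, then telescope the two linear-factor products into Gamma ratios via the identities \eqref{eqn:basic-identities}. You simply carry out in full the arithmetic that the paper leaves implicit, and your closing remark on the operator-theoretic meaning of $B$ and the cancellation of odd powers of $\beta$ is a useful addition, not a deviation.
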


\begin{proof}
 We first write
 \begin{equation}
  \label{eqn:D-to-B}
  \begin{split}
  D_{j,s}^{\gamma,m,\mu} & = (4\mu)^j\prod_{t=0}^{j-1} \left( -\frac{1}{4\mu}\Delta_\gamma + \left( t - \frac{s}{2} \right)\left( \frac{m+s-1}{2} - t \right) \right) \\
   & = (4\mu)^j\prod_{t=0}^{j-1}\left( \frac{1}{4\mu}B^2 - \left( t - \frac{m+2s-1}{4}\right)^2 \right) .
  \end{split}
 \end{equation}
 The conclusion follows from the identities~\eqref{eqn:basic-identities}.
\end{proof}

One way to remove the free parameter $s$ from \cref{main-thm} is to express the GJMS operators spectrally.
The following lemma recovers the spectral formula of Ao et al.~\cite{AoChanDelatorreFontelosGonzalezWei2019} for the GJMS operators of locally conformally flat special Einstein products.

\begin{corollary}
 \label{mar-formula}
 Let $(M^\ell \times N^{d-\ell}, g \oplus h)$ be a special Einstein product.
 Let $k \in \bN$;
 if $d$ is even, assume additionally that $k \leq \frac{d}{2}$.
 Then
 \begin{equation}
  \label{eqn:mar-formula}
  P_{2k}^{g \oplus h} = (4\lambda)^k \frac{\Gamma\bigl(\frac{1}{2\sqrt{\lambda}}C - \frac{i}{2\sqrt{\lambda}}B + \frac{k+1}{2}\bigr) \Gamma\bigl( \frac{1}{2\sqrt{\lambda}}C + \frac{i}{2\sqrt{\lambda}}B + \frac{k+1}{2}\bigr)}{\Gamma\bigl(\frac{1}{2\sqrt{\lambda}}C - \frac{i}{2\sqrt{\lambda}}B - \frac{k-1}{2}\bigr) \Gamma\bigl( \frac{1}{2\sqrt{\lambda}}C + \frac{i}{2\sqrt{\lambda}}B - \frac{k-1}{2}\bigr)} ,
 \end{equation}
 where
 \begin{align*}
  B & := \sqrt{-\Delta_g - \left( \frac{\ell-1}{2} \right)^2\lambda} , \\
  C & := \sqrt{-\Delta_h + \left( \frac{d-\ell-1}{2} \right)^2\lambda} .
 \end{align*}
\end{corollary}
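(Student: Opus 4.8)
The plan is to start from \cref{main-thm} with the parameter $s$ left free, rewrite both factors in \eqref{eqn:main-factorization} in terms of the operators $B$ and $C$ of the statement, and then sum the resulting binomial series by means of a classical hypergeometric summation.

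First I would set $a := \frac{1}{2\sqrt{\lambda}}C$ and $b := \frac{1}{2\sqrt{\lambda}}B$, so that $a^2 = \frac{C^2}{4\lambda}$ and $b^2 = \frac{B^2}{4\lambda}$ are polynomials in $\Delta_h$ and $\Delta_g$, respectively. The factor $D^{h,d-\ell,\lambda}_{j,s}$ is of the form \eqref{eqn:D-to-B} with $m = d-\ell$, $\mu = \lambda$, and base operator $C$, while $D^{g,\ell,-\lambda}_{k-j,-d/2+k-s}$ is of the form \eqref{eqn:D-to-B} with $m = \ell$, $\mu = -\lambda$, and base operator $B$. Factoring the quadratics appearing in \eqref{eqn:D-to-B} --- namely $a^2 - (t-\beta)^2 = (a+\beta-t)(a-\beta+t)$ for the $h$-factor, where $\beta := \frac{d-\ell+2s-1}{4}$, and $-b^2 - (t-\alpha)^2 = -(t-\alpha+ib)(t-\alpha-ib)$ for the $g$-factor, where $\alpha := \frac{\ell-d+2k-2s-1}{4}$, so that $\alpha+\beta = \frac{k-1}{2}$ --- turns each $D$ into a product of shifted linear factors, i.e.\ into a product of rising Pochhammer symbols $(x)_j := x(x+1)\cdots(x+j-1)$. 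Rewriting the binomial coefficients and falling factorials in terms of rising Pochhammer symbols (all the sign factors thereby introduced cancel), formula \eqref{eqn:main-factorization} becomes
\[
 P_{2k}^{g \oplus h} = (4\lambda)^k\,(p_+)_k\,(p_-)_k \sum_{j=0}^{k} \frac{(-k)_j\,\bigl(-(a+\beta)\bigr)_j\,(a-\beta)_j}{(1-p_+-k)_j\,(1-p_--k)_j\;j!},
\]
where $p_\pm := \beta + \frac{1-k}{2} \pm ib$.

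The sum on the right is the terminating generalized hypergeometric series ${}_3F_2$ at argument $1$. Using only $\alpha+\beta = \frac{k-1}{2}$, one checks at once that $1 + (-k) + \bigl(-(a+\beta)\bigr) + (a-\beta) = 1 - k - 2\beta = (1-p_+-k) + (1-p_--k)$, so the series is balanced and the Pfaff--Saalsch\"utz summation~\cite{NIST:DLMF}*{Equation~(16.4.3)} evaluates it in closed form. In the notation of that identity the relevant combinations simplify to $C - A = \frac{1-k}{2} + a - ib$, $C - B = \frac{1-k}{2} - a - ib$, $C - A - B = p_-$, and $C = 1 - p_+ - k$; hence a factor of $(p_-)_k$ cancels and
\[
 P_{2k}^{g \oplus h} = (4\lambda)^k\,\frac{(p_+)_k}{(1-p_+-k)_k}\,\Bigl(\frac{1-k}{2} + a - ib\Bigr)_k\Bigl(\frac{1-k}{2} - a - ib\Bigr)_k .
\]
The identity $(1-p-k)_k = (-1)^k(p)_k$, which is the special case $z=p$, $m=k$ of \eqref{eqn:euler}, gives $(p_+)_k/(1-p_+-k)_k = (-1)^k$; and reversing the order of the factors in the second of the two Pochhammer symbols gives $(-1)^k\bigl(\frac{1-k}{2}-a-ib\bigr)_k = \bigl(\frac{1-k}{2}+a+ib\bigr)_k$. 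Rewriting the two surviving Pochhammer symbols as ratios of Gamma functions via \eqref{eqn:basic-identities} and substituting back $a = \frac{1}{2\sqrt{\lambda}}C$ and $ib = \frac{i}{2\sqrt{\lambda}}B$ then yields \eqref{eqn:mar-formula}. Note that $s$ enters only through $\beta$, hence only through $p_\pm$, and has disappeared from the answer --- as it must, by \cref{main-thm}; and since the right-hand side is a quotient of Gamma functions whose arguments differ by integers, it is a polynomial in $B^2$ and $C^2$, hence a genuine differential operator, independent of any choice of branch for the square roots defining $B$ and $C$.

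The step I expect to be the main obstacle is putting the binomial sum into visibly \emph{balanced} ${}_3F_2$ form: this requires correctly pairing $\Delta_g$ with the ``$m=\ell$, $\mu<0$'' slot and $\Delta_h$ with the ``$m=d-\ell$, $\mu>0$'' slot of \eqref{eqn:D-to-B}, carefully tracking the sign factors produced when binomial coefficients and falling factorials are converted to rising Pochhammer symbols, and --- crucially --- noticing that $\alpha+\beta$ is exactly $\frac{k-1}{2}$, which is precisely the balancedness condition that makes the Pfaff--Saalsch\"utz identity applicable. Once the series is identified as balanced, the remaining evaluation and the cosmetic simplifications to the stated form are routine.
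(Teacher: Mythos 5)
Your proposal is correct, and I verified the key computations: with $\beta = \frac{d-\ell+2s-1}{4}$ and $\alpha = \frac{\ell-d+2k-2s-1}{4}$ one does get $\alpha+\beta = \frac{k-1}{2}$, the four sign factors $(-1)^j$ produced by converting $\binom{k}{j}$, the two factors $(x)_{k-j} = (x)_k(-1)^j/(1-x-k)_j$, and the $h$-factor $(-1)^j(-(a+\beta))_j$ do cancel, the resulting terminating ${}_3F_2$ is balanced, and the Pfaff--Saalsch\"utz evaluation collapses to the stated quotient of Gamma functions. (All of this is legitimate at the operator level because, after clearing denominators, every step is a polynomial identity in the commuting operators $\Delta_g$ and $\Delta_h$.) However, your route is genuinely different from the paper's. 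The paper never sums the binomial series for general $s$: it exploits the freedom in $s$ together with the Spectral Theorem on the compact factor $(N^{d-\ell},h)$, testing the formula against products $uf$ with $u$ an eigenfunction of $\Delta_h$ and choosing $s$ adapted to the eigenvalue via $Cu = \frac{d-\ell+2s-1}{2}\sqrt{\lambda}\,u$. This makes the $t=0$ factor of $D^{h,d-\ell,\lambda}_{j,s}$ annihilate $u$ for every $j\geq 1$, so only the $j=0$ term of \cref{main-thm} survives; the result then follows from \cref{Gamma} and Euler's reflection formula, with no hypergeometric summation needed. Your argument buys a purely algebraic identity among polynomials in $\Delta_g,\Delta_h$ that does not pass through the spectral decomposition of $\Delta_h$ (so compactness of $N$ plays no role), at the cost of importing the Pfaff--Saalsch\"utz theorem and some bookkeeping with Pochhammer symbols; the paper's argument is shorter and more elementary but is tied to diagonalizing one factor. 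Two cosmetic remarks: your use of $C$ both for the operator in the statement and for a denominator parameter of the ${}_3F_2$ is a notational clash worth fixing, and it would be worth stating explicitly that the Pfaff--Saalsch\"utz identity, being equivalent to a polynomial identity in its parameters, may be applied with commuting self-adjoint operators in place of complex numbers.
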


\begin{proof}
 Since $(N^{d-\ell},h)$ is compact, the Spectral Theorem implies that we need only verify Equation~\eqref{eqn:mar-formula} when applied to functions $uf$, where $f \in C^\infty(M)$ is arbitrary and $u \in C^\infty(N)$ is an eigenfunction of $\Delta_h$.
 Choose $s \in \bR$ such that
 \begin{equation}
  \label{eqn:choose-ell}
  Cu = \frac{d-\ell+2s-1}{2}\sqrt{\lambda}u ;
 \end{equation}
 that $s$ exists follows from the positivity of the operator $C$.
 Equation~\eqref{eqn:D-to-B} implies that $D_{j,s}^{h,d-\ell,\lambda}u = 0$ for all $j \geq 1$.
 \Cref{main-thm} then implies that
 \begin{equation*}
  P_{2k}^{g \oplus h}(uf) = D_{k,-\frac{d}{2}+k-s}^{g,\ell,-\lambda}(uf) .
 \end{equation*}
 Combining this with \cref{Gamma} yields
 \begin{align*}
  P_{2k}^{g \oplus h}(uf) & = (-4\lambda)^k\frac{\Gamma\bigl(-\frac{i}{2\sqrt{\lambda}}B + \frac{d-\ell+2k+2s+1}{4} \bigr) \Gamma\bigl( -\frac{i}{2\sqrt{\lambda}}B - \frac{d-\ell-2k+2s-3}{4} \bigr)}{\Gamma\bigl( -\frac{i}{2\sqrt{\lambda}}B + \frac{d-\ell-2k+2s+1}{4} \bigr) \Gamma\bigl( -\frac{i}{2\sqrt{\lambda}}B - \frac{d-\ell+2k+2s-3}{4} \bigr)}(uf) \\
  & = (-4\lambda)^k\frac{\Gamma\bigl(-\frac{i}{2\sqrt{\lambda}}B + \frac{1}{2\sqrt{\lambda}}C + \frac{k+1}{2} \bigr) \Gamma\bigl( -\frac{i}{2\sqrt{\lambda}}B - \frac{1}{2\sqrt{\lambda}}C + \frac{k+1}{2} \bigr)}{\Gamma\bigl( -\frac{i}{2\sqrt{\lambda}}B + \frac{1}{2\sqrt{\lambda}}C - \frac{k-1}{2} \bigr) \Gamma\bigl( -\frac{i}{2\sqrt{\lambda}}B - \frac{1}{2\sqrt{\lambda}}C - \frac{k-1}{2} \bigr)}(uf) ,
 \end{align*}
 where the second equality uses Equation~\eqref{eqn:choose-ell}.
 The final conclusion follows from Euler's reflection formula~\eqref{eqn:euler}.
\end{proof}

Another way to remove the free parameter $s$ from \cref{main-thm} is to express the GJMS operators as compositions of second- and fourth-order operators which are real polynomials in $\Delta_g$ and $\Delta_h$.
This is done by combining the basic identities~\eqref{eqn:basic-identities} with Equation~\eqref{eqn:mar-formula} and eliminating square roots.

\begin{proof}[Proof of \cref{factorization}]
 Combining \cref{mar-formula} with the identities~\eqref{eqn:basic-identities} yields
 \begin{align*}
  P_{2k}^{g \oplus h} & = (4\lambda)^k\prod_{t=0}^{k-1} \left( \frac{1}{2\sqrt{\lambda}}C - \frac{i}{2\sqrt{\lambda}}B + t - \frac{k-1}{2} \right) \left( \frac{1}{2\sqrt{\lambda}}C + \frac{i}{2\sqrt{\lambda}}B + t - \frac{k-1}{2}\right) \\
  & = \prod_{t=0}^{k-1} \left( B^2 + C^2 - 2(k-1-2t)\sqrt{\lambda}C + (k-1-2t)^2\lambda \right) .
 \end{align*}
 Denote
 \begin{equation*}
  \mD_t := B^2 + C^2 - 2(k-1-2t)\sqrt{\lambda}C + (k-1-2t)^2\lambda ,
 \end{equation*}
 so that
 \begin{equation*}
  P_{2k}^{g \oplus h} = \prod_{t=0}^{k-1} \mD_t .
 \end{equation*}
 The conclusion follows from the observations
 \begin{align*}
  \mD_t\mD_{k-1-t} & = \left( B^2 + C^2 + (k-1-2t)^2\lambda \right)^2 - 4(k-1-2t)^2\lambda C^2 , \\
  \mD_{\frac{k-1}{2}} & = B^2 + C^2 , && \text{if $k$ is odd}. \qedhere
 \end{align*}
\end{proof}

\section{Applications}
\label{sec:maximum}

We now turn to the problem of constructing, under suitable assumptions on the parameters $d$, $\ell$, and $k$, conformal metrics which realize the $Q_{2k}$-Yamabe constant~\eqref{eqn:yamabe} of a special Einstein product.

The first task is to find sufficient conditions for $P_{2k}$ to satisfy the Strong Maximum Principle.
To that end, we first use an iterative strategy modeled on an argument of Gursky and Malchiodi~\cite{GurskyMalchiodi2012} to derive sufficient conditions for certain fourth-order operators to satisfy the Strong Maximum Principle.
Unlike their argument, our argument does not pass through geometric scalar quantities.

\begin{lemma}
 \label{baby-maximum-principle}
 Let $(X^d,g)$, $d \geq 5$, be a closed Riemannian manifold with constant positive scalar curvature.
 Given constants $a,c \in \bR$ such that $a < \frac{d(d-2)}{4}$, define $D \colon C^\infty(X) \to C^\infty(X)$ by
 \begin{equation*}
  Du := \Delta^2u + 4a\lp P,\nabla^2u\rp - (d-2)J\Delta u + cu
 \end{equation*}
 for all $u \in C^\infty(X)$, where $P := \frac{1}{d-2}(\Ric - Jg)$ is the Schouten tensor and $J := \tr_g P$ is its trace.
 Suppose that $c>0$ and
 \begin{equation}
  \label{eqn:assumption}
  \begin{split}
   (1-a)\Ric + \frac{2(d^2-5d+2)a-(d-2)^2}{d^2-4d-4}Jg & \geq 0 , \\
   \frac{(d-4)(d^2-2d-4a)^2}{4d(d^2-4d-4)}J^2 - c - (d-4)a^2\lv\mathring{P}\rv^2 \geq 0 ,
  \end{split}
 \end{equation}
 where $\mathring{P} := P - \frac{J}{d}g$.
 Then $D$ satisfies the Strong Maximum Principle.
\end{lemma}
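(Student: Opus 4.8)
The plan is to adapt the iterative maximum-principle argument of Gursky and Malchiodi, reducing the fourth-order operator $D$ — wherever possible — to a second-order maximum principle. First I would note that it suffices to prove the following touching-zero statement: every $\phi \in C^\infty(X)$ with $\phi \geq 0$, $D\phi \geq 0$, and $\phi(x_0) = 0$ for some $x_0 \in X$ is identically zero. Indeed, if $Du \geq 0$ and $m := \min_X u \leq 0$, then $\phi := u - m$ satisfies $\phi \geq 0$, vanishes somewhere, and, since $D(m) = cm$ for a constant $m$, also $D\phi = Du - cm \geq 0$ because $c > 0$; granting the touching-zero statement forces $\phi \equiv 0$, hence $u \equiv m$, and then $Du = cm \geq 0$ forces $m = 0$ and $u \equiv 0$. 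Thus $u > 0$ or $u \equiv 0$, which is the Strong Maximum Principle.

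Next I would record an exact algebraic rewriting of $D$. Since the scalar curvature is a positive constant, $J$ is a positive constant and $\divsymb P = \nabla J = 0$; moreover the hypothesis $a < \tfrac{d(d-2)}{4}$ is exactly the statement that $\beta := \tfrac{d^2 - 2d - 4a}{2d}$ is positive. Splitting $P = \mathring P + \tfrac{J}{d}g$, one verifies directly the identity
\[
  D\phi = (-\Delta + \beta J)^2\phi + 4a\lp \mathring P, \nabla^2\phi\rp + (c - \beta^2 J^2)\phi .
\]
Since $-\Delta + \beta J$ is a second-order elliptic operator with positive zeroth-order term, it satisfies the classical strong minimum principle. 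Setting $w := (-\Delta + \beta J)\phi$, the heart of the proof is the claim that $w \geq 0$; granting this, the conditions $\phi \geq 0$, $(-\Delta + \beta J)\phi = w \geq 0$, and $\phi(x_0) = 0$ force $\phi \equiv 0$ by the strong minimum principle for $-\Delta + \beta J$, which completes the proof.

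Establishing $w \geq 0$ is the main obstacle. The identity above and $D\phi \geq 0$ give $(-\Delta + \beta J)w \geq -4a\lp\mathring P,\nabla^2\phi\rp - (c - \beta^2 J^2)\phi$, and if $w$ were somewhere negative I would pair this with its (Lipschitz) negative part $w^-$ and integrate by parts: using $\divsymb P = 0$, the Bochner identity $\int_X \lv\nabla^2\phi\rv^2 = \int_X (\Delta\phi)^2 - \int_X \Ric(\nabla\phi,\nabla\phi)$ to turn the Hessian contribution into a Ricci term, and Cauchy--Schwarz on $\lp\mathring P,\nabla^2\phi\rp$ (which produces an $\lv\mathring P\rv^2$ term), the nonpositivity of $\int_X(\lv\nabla w^-\rv^2 + \beta J (w^-)^2)$ is reduced to two pointwise algebraic inequalities. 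The first is nonnegativity of a quadratic form in $\nabla\phi$ whose coefficients, once $\mathring P$ is written in terms of $\Ric$ and $Jg$, are precisely the first inequality in \eqref{eqn:assumption}; the second is nonnegativity of the residual zeroth-order expression, in which the square $(d^2-2d-4a)^2 = 4d^2\beta^2$, the factor $d-4$, and the term $-(d-4)a^2\lv\mathring P\rv^2$ occur, and which is precisely the second inequality in \eqref{eqn:assumption}. Because the error terms couple $w^-$ to $\phi$ and $\nabla^2\phi$ — controlled by $w$ only modulo lower-order quantities — the estimate need not close in one pass; following Gursky and Malchiodi, one iterates it (combining with elliptic estimates for $(-\Delta + \beta J)^{-1}$), each pass improving the bound until $w \geq 0$ is reached, and the two inequalities of \eqref{eqn:assumption} are exactly what is needed for the iteration to converge. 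The remaining work is the elementary but bookkeeping-heavy optimization of the Cauchy--Schwarz and Bochner constants that reproduces those two inequalities — this is also where $d \geq 5$ enters, via $d^2 - 4d - 4 > 0$.
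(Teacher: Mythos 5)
Your reduction to the touching-zero statement and the algebraic identity $D\phi = (-\Delta + \beta J)^2\phi + 4a\lp \mathring{P},\nabla^2\phi\rp + (c-\beta^2J^2)\phi$ with $\beta = \tfrac{d^2-2d-4a}{2d} > 0$ are both correct, and the overall strategy (force a second-order quantity to have a sign, then invoke the second-order strong maximum principle) is the right one. But the step you yourself identify as "the main obstacle" --- proving $w := (-\Delta+\beta J)\phi \geq 0$ --- is where the proof actually lives, and your proposed mechanism for it does not work as described. Testing $(-\Delta+\beta J)w \geq -4a\lp\mathring{P},\nabla^2\phi\rp - (c-\beta^2J^2)\phi$ against $w^-$ produces, after integrating by parts using $\divsymb \mathring{P}=0$, cross terms of the form $\int \mathring{P}(\nabla\phi,\nabla w^-)$ and $\int \phi\, w^-$ in which $\phi$ and $\nabla\phi$ are not controlled by $w^-$ in any quantified way; there is no smallness parameter, so the appeal to "iterating with elliptic estimates for $(-\Delta+\beta J)^{-1}$ until the bound improves to $w\geq 0$" is not a proof scheme that can be checked or believed. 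This is a genuine gap, not bookkeeping.

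The paper closes this gap with two devices that are absent from your sketch and that you would need in some form. First, the auxiliary second-order quantity is \emph{nonlinear}: it is $S = \Delta u + \tfrac{2}{d-4}u^{-1}\lv\nabla u\rv^2 + eu$, which up to a positive factor is $(\Delta + \tfrac{d-2}{d-4}e)$ applied to $u^{\frac{d-2}{d-4}}$. The gradient term $u^{-1}\lv\nabla u\rv^2$ is essential: when one evaluates a drift operator $\Box_u S$ at an interior maximum of $S$ and substitutes the pointwise Bochner formula together with $Du>0$, it is exactly this term that supplies the pieces needed to complete a nonnegative square $\lv -\tfrac{d-4}{2}auP + \nabla^2u - \tfrac{d-2}{d-4}u^{-1}(du)^2 + \tfrac{1}{d-4}u^{-1}\lv\nabla u\rv^2 g\rv^2$ and to produce the Ricci term; the constants in~\eqref{eqn:assumption} (note the ubiquitous factors of $d-4$) come from optimizing $e$ in this pointwise computation, not from an integral estimate. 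Your linear $w$ has no analogue of the square-completion, which is why the estimate cannot close. Second, since $S$ involves $u^{-1}$, one must first deform to the positive functions $u_\lambda = 1-\lambda+\lambda u$ (using $c>0$ to keep $Du_\lambda>0$) and run a continuity argument in $\lambda$, showing $S_\lambda<0$ for all $\lambda<\lambda_0$ and only then passing to the limit; this deformation, not an iteration of integral bounds, is the "iterative strategy" of Gursky--Malchiodi. To repair your proof you would need to replace $w$ by the nonlinear quantity $S_\lambda$ (or prove by an independent argument that your linear $w$ is signed, which the present hypotheses do not obviously yield) and supply the maximum-point computation in full.
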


\begin{proof}
 Let $e < 0$ be a constant to be specified later.
 Let $u \in C^\infty(X)$ be such that $Du \geq 0$ and $\min u \leq 0$.
 Set
 \begin{equation*}
  \lambda_0 := \frac{1}{1-\min u} \in (0,1] .
 \end{equation*}
 Given $\lambda \in (0,\lambda_0)$, set $u_\lambda := 1-\lambda + \lambda u$.
 Then $\min u_\lambda > 0$ and
 \begin{equation*}
  Du_\lambda = (1-\lambda)c + \lambda Du > 0 .
 \end{equation*}
 Define $S_\lambda \in C^\infty(X)$ by
 \begin{equation*}
  S_\lambda := \Delta u_\lambda + \frac{2}{d-4}u_\lambda^{-1}\lv\nabla u_\lambda\rv^2 + eu_\lambda .
 \end{equation*}
 It suffices to show that $S_\lambda < 0$ for all $\lambda \in (0,\lambda_0)$.
 Indeed, if this is the case, then
 \begin{equation*}
  0 > \frac{d-2}{d-4}S_\lambda u_\lambda^{\frac{2}{d-4}} = \Delta u_\lambda^{\frac{d-2}{d-4}} + \frac{d-2}{d-4}eu_\lambda^{\frac{d-2}{d-4}} .
 \end{equation*}
 Taking the limit $\lambda \to \lambda_0$ yields
 \begin{equation*}
  \Delta u_{\lambda_0}^{\frac{d-2}{d-4}} + \frac{d-2}{d-4}eu_{\lambda_0}^{\frac{d-2}{d-4}} \leq 0 .
 \end{equation*}
 Since $e<0$ and $\min u_{\lambda_0}=0$, the Strong Maximum Principle for second-order elliptic operators implies that $u_{\lambda_0} = 0$.
 We conclude that
 \begin{equation*}
  0 = Du_{\lambda_0} \geq (1-\lambda_0)c,
 \end{equation*}
 and hence $\lambda_0=1$.
 In particular, $u=u_{\lambda_0}=0$.
 
 We now prove that $S_\lambda < 0$ for all $\lambda \in (0,\lambda_0)$.
 Suppose to the contrary that the claim is false.
 Note that $S_0 = e < 0$.
 A standard continuity argument implies that there is a $\lambda \in (0,\lambda_0)$ such that $\max S_\lambda = 0$.
 Pick $x \in X$ such that $S_\lambda(x)=0$.
 Denote $u := u_\lambda$ and $S := S_\lambda$.
 Define $\Box_u \colon C^\infty(X) \to C^\infty(X)$ by
 \begin{equation*}
  \Box_u w := \Delta w - \frac{4}{d-4}u^{-1}\lp \nabla u, \nabla w \rp .
 \end{equation*}
 Combining our assumptions on $S$ and $x$ with the Bochner formula yields
 \begin{align*}
  0 & \geq (\Box_u S)(x) \\
   & = \Delta^2u + \frac{4}{d-4}u^{-1}\lv\nabla^2u\rv^2 - \frac{8(d-2)}{(d-4)^2}u^{-2}\nabla^2u(\nabla u,\nabla u) + \frac{4}{d-4}u^{-1}\Ric(\nabla u,\nabla u) \\
    & \quad + \frac{4(d-1)}{(d-4)^2}u^{-3}\lv\nabla u\rv^4 - \frac{4}{d-4}eu^{-1}\lv\nabla u\rv^2 - e^2u ,
 \end{align*}
 where the right-hand side is evaluated at $x$.
 Since $Du > 0$, we deduce that
 \begin{align*}
  0 & > -4a\lp P,\nabla^2u\rp+ \frac{4}{d-4}u^{-1}\lv\nabla^2u\rv^2 - \frac{8(d-2)}{(d-4)^2}u^{-2}\nabla^2u(\nabla u,\nabla u) \\
   & \quad + \frac{4}{d-4}u^{-1}\Ric(\nabla u,\nabla u) + \frac{4(d-1)}{(d-4)^2}u^{-3}\lv\nabla u\rv^4 - (e^2+c)u \\
   & \quad - \frac{4}{d-4}\left( e + \frac{d-2}{2}J \right)u^{-1}\lv\nabla u\rv^2 - (d-2)Jeu .
 \end{align*}
 Regrouping terms yields
 \begin{align*}
  0 & > \frac{4u^{-1}}{d-4}\left| -\frac{d-4}{2}a uP + \nabla^2u - \frac{d-2}{d-4}u^{-1}(du)^2 + \frac{1}{d-4}u^{-1}\lv\nabla u\rv^2 g \right|^2 \\
   & \quad + \frac{4(1-a)}{d-4}u^{-1}\Ric(\nabla u,\nabla u) - \frac{4}{d-4}\left( \frac{d-6}{d-4}e + \frac{d-2-4a}{2}J \right)u^{-1}\lv\nabla u\rv^2 \\
   & \quad - \left( e^2 + (d-2)Je + c + (d-4)a^2\lv P\rv^2 \right)u .
 \end{align*}
 Recalling that $S=0$, we deduce from the Cauchy--Schwarz inequality that
 \begin{multline*}
  \left| -\frac{d-4}{2}a uP + \nabla^2u - \frac{d-2}{d-4}u^{-1}(du)^2 + \frac{1}{d-4}u^{-1}\lv\nabla u\rv^2 g \right|^2 \\
   \geq \frac{1}{d}\left( -\frac{d-4}{2}aJ - e \right)^2u^2 .
 \end{multline*}
 Combining the previous two displays gives
 \begin{align*}
  0 & > \frac{4(1-a)}{d-4}u^{-1}\Ric(\nabla u,\nabla u) - \frac{4}{d-4}\left( \frac{d-6}{d-4}e + \frac{d-2-4a}{2}J \right)u^{-1}\lv\nabla u\rv^2 \\
   & \quad - \left( \frac{d^2-4d-4}{d(d-4)}e^2 + \frac{d^2-2d-4a}{d}Je + c + (d-4)a^2\lv\mathring{P}\rv^2 \right)u .
 \end{align*}
 Since $d \geq 5$, we see that $d^2-4d-4 \geq 1$.
 Taking $e = -\frac{(d^2-2d-4a)(d-4)}{2(d^2-4d-4)}J$ in the previous display yields
 \begin{align*}
  0 & > \frac{4}{d-4}u^{-1} \left\lp (1-a)\Ric + \frac{2(d^2-5d+2)a-(d-2)^2}{d^2-4d-4}Jg , (du)^2 \right\rp \\
   & \quad + \left( \frac{(d-4)(d^2-2d-4a)^2}{4d(d^2-4d-4)}J^2 - c - (d-4)a^2\lv\mathring{P}\rv^2 \right)u .
 \end{align*}
 This contradicts our assumption~\eqref{eqn:assumption}.
\end{proof}

\Cref{baby-maximum-principle} leads to sufficient conditions for the GJMS operator $P_{2k}$ to satisfy the Strong Maximum Principle.

\begin{proof}[Proof of \cref{positivity}]
 If $\ell=0$, then the conclusion follows from the factorization~\eqref{eqn:einstein-factorization} of the GJMS operators at Einstein manifolds.
 Suppose now that $\ell\geq1$.

 By scaling, we may assume that $\lambda=1$.
 In this case the Schouten tensor of $\cg := g \oplus h$ is $P = \frac{1}{2}\bigl( (-g) \oplus h \bigr)$.
 Direct computation gives
 \begin{equation*}
  J = \frac{d-2\ell}{2} , \quad \Ric \leq \frac{2(d-\ell-1)}{d-2\ell}J\cg , \quad \lv\mathring{P}\rv^2 = \frac{\ell(d-\ell)}{d} .
 \end{equation*}
 It follows that the fourth-order operator~\eqref{eqn:defn-Dt} is equivalently written
 \begin{align*}
  D_tu & = \Delta^2u + 4a\lp P,\nabla^2u\rp - (d-2)J\Delta u + cu , \\
  c & = \left( \left( \frac{d-2}{2} \right)^2 - a \right) \left( \left( \frac{d-2\ell}{2} \right)^2 - a \right) ,
 \end{align*}
 with $a = t^2 \in \bigl[ 1, (k-1)^2 \bigr]$.
 Taking $d \geq 2k + 2\ell - 1$ yields $a < \frac{d(d-2)}{4}$.
 Since $a = O(1)$ as $d \to \infty$, we see that
 \begin{equation}
  \label{eqn:first-equation}
  \begin{split}
  \MoveEqLeft[4] (1-a)\Ric + \frac{2(d^2-5d+2)a-(d-2)^2}{d^2-4d-4}J\cg \\
   & \geq \biggl( \frac{2(d-\ell-1)(1-a)}{d-2\ell} + \frac{2(d^2-5d+2)a - (d-2)^2}{d^2-4d-4} \biggr)J\cg 
  \end{split}
 \end{equation}
 and
 \begin{equation}
  \label{eqn:second-equation}
  \begin{split}
   \MoveEqLeft[4] \frac{(d-4)(d^2-2d-4a)^2}{4d(d^2-4d-4)}J^2 - c - (d-4)a^2\lv\mathring{P}\rv^2 \\
    & = \frac{(d-2)^2}{d^2-4d-4}J^2 + \left( \frac{(d-2)^2}{(d-2\ell)^2} + 1 - \frac{2(d-2)(d-4)}{d^2-4d-4} \right)aJ^2 \\
     & \quad + \left( \frac{(d-4)(d-2\ell)^2}{d(d^2-4d-4)} - 1 - \frac{\ell(d-\ell)(d-4)}{d}\right) a^2 .
  \end{split}
 \end{equation}
 In particular,
 \begin{align*}
  (1-a)\Ric + \frac{2(d^2-5d+2)a-(d-2)^2}{d^2-4d-4}J\cg & \geq \frac{d}{2}\cg + O(1) , \\
  \frac{(d-4)(d^2-2d-4a)^2}{4d(d^2-4d-4)}J^2 - c - (d-4)a^2\lv\mathring{P}\rv^2 & = J^2 + O(d) .
 \end{align*}
 The conclusion readily follows.
\end{proof}

\begin{remark}
 \label{P6-remark}
 In principle, the number $D(k,\ell)$ can be estimated using the roots of the coefficients of $J\cg$ and $J^2$ in Equations~\eqref{eqn:first-equation} and~\eqref{eqn:second-equation}, respectively, with $a \in \bigl[1,(k-1)^2\bigr]$.
 For example, taking $k=3$, $a=(k-1)^2=4$, and $\ell=1$ yields
 \begin{align*}
  (1-a)\Ric + \frac{2(d^2-5d+2)a-(d-2)^2}{d^2-4d-4}J\cg & \geq \frac{(d-6)^2}{d^2-4d-4}J\cg , \\
  \frac{(d-4)(d^2-2d-4a)^2}{4d(d^2-4d-4)}J^2 - c - (d-4)a^2\lv\mathring{P}\rv^2 & \geq \frac{d^2-52d+228}{d^2-4d-4}J^2 .
 \end{align*}
 The right-hand sides are positive for $d \geq 48$, and hence $7 \leq D(3,1) \leq 48$.
\end{remark}

The second task is to verify the estimate $Y_{2k} < Y_{2k}(S^d,[\grd])$ on special Einstein products $M^\ell \times N^{d-\ell}$ with $d$ sufficiently large.
In the case of locally conformally flat special Einstein products, we do this using results of Qing and Raske~\cite{QingRaske2006}.
Indeed, in this case we know the optimal range of dimensions $d$.

\begin{lemma}
 \label{positive-mass}
 Let $k,\ell$ be nonnegative integers and let $d \geq 2k + 2\ell - 1$.
 Suppose that $(M^\ell \times N^{d-\ell}, g \oplus h)$ is a locally conformally flat special Einstein product.
 Then
 \begin{equation}
  \label{eqn:yamabe-estimate}
  Y_{2k}(M^\ell \times N^{d-\ell} , [g \oplus h] ) < Y_{2k}(S^d, \grd) .
 \end{equation}
 Moreover, there is a $\sigma \in [g \oplus h]$ such that $Q_{2k}^\sigma = Y_{2k}(M^\ell \times N^{d-\ell},[g \oplus h])$ and $\Vol_\sigma(M \times N) = 1$.
\end{lemma}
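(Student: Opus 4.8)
The plan is to recognize a locally conformally flat special Einstein product as a Kleinian quotient of the round sphere whose limit set has small dimension, and then to invoke the solvability theorem of Qing and Raske~\cite{QingRaske2006} for the $Q_{2k}$-Yamabe problem on such manifolds. By scaling we may assume $\lambda = 1$, and we may assume $\ell \geq 1$, the case $\ell = 0$ (a spherical space form) being classical. Since $g \oplus h$ is locally conformally flat and each factor is Einstein, $(M^\ell, g)$ has constant curvature $-1$ and $(N^{d-\ell}, h)$ has constant curvature $+1$~\cite{Besse}, so the Riemannian universal cover of $M^\ell \times N^{d-\ell}$ is $H^\ell \times S^{d-\ell}$ with the product metric. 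This space is globally conformally equivalent to the round sphere $S^d$ with a totally geodesic $(\ell-1)$-sphere deleted, so $M^\ell \times N^{d-\ell}$ is conformal to $\Omega/\Gamma$, where $\Omega := S^d \setminus S^{\ell-1}$ and $\Gamma \cong \pi_1(M) \times \pi_1(N)$ acts by conformal transformations of $S^d$ preserving $S^{\ell-1}$. (Equivalently, one checks $\mathrm{scal}_{g \oplus h} = (d-1)(d-2\ell)\lambda > 0$ in the stated range of $d$ and invokes the Schoen--Yau injectivity of the developing map.) The limit set of $\Gamma$ lies in $S^{\ell-1}$, and since $M$ is compact the convex core of $\Omega/\Gamma$ is compact, so $\Gamma$ is geometrically finite with Poincar\'e exponent $\delta(\Gamma) \leq \ell - 1$.

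Next I would record the two inputs needed to apply the Qing--Raske theorem. First, $P_{2k}^{g \oplus h}$ is a positive operator: by \cref{factorization} it factors into second- and fourth-order operators which are polynomials in $\Delta_g$ and $\Delta_h$, and using that the spectra of $-\Delta_g$ and $-\Delta_h$ are nonnegative one checks directly that each factor is positive --- indeed coercive --- precisely when $d \geq 2k + 2\ell - 1$ (consistent with the kernel appearing at $d = 2k + 2\ell - 2$). Second, the threshold inequality holds: $\delta(\Gamma) \leq \ell - 1 < \ell - \tfrac12 \leq \tfrac{d-2k}{2}$, again by the hypothesis $d \geq 2k + 2\ell - 1$. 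The theorem of Qing and Raske~\cite{QingRaske2006} then applies: for a compact locally conformally flat manifold $\Omega/\Gamma$ with $\delta(\Gamma) < \tfrac{d-2k}{2}$ and positive $Q_{2k}$-type invariant, the strict inequality~\eqref{eqn:yamabe-estimate} holds unless the manifold is conformally $S^d$, and the $Q_{2k}$-Yamabe problem admits a minimizing solution $\sigma$. Since $\Gamma$ is nontrivial ($\ell \geq 1$), both conclusions of the lemma follow.

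The substantive content sits entirely inside the cited theorem, whose mechanism is the method of images: one builds the Green's function of $P_{2k}^{g \oplus h}$ by summing the round-sphere Green's function over the $\Gamma$-orbit, a Poincar\'e series that converges exactly when $\delta(\Gamma) < \tfrac{d-2k}{2}$, and whose ``mass'' --- the constant term of the local expansion --- is a manifestly positive sum, which feeds Schoen's test-function argument to beat $Y_{2k}(S^d, \grd)$ strictly; the minimizer is then produced by the usual subcritical approximation. On our side the only work is the conformal identification of the universal cover with $S^d \setminus S^{\ell-1}$, the bookkeeping that the resulting Poincar\'e exponent $\ell - 1$ drops strictly below $\tfrac{d-2k}{2}$ in exactly the claimed range of dimensions, and the positivity check from \cref{factorization}. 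I expect the first of these --- pinning down the Kleinian group and its limit set, including the finite-volume and low-dimensional edge cases $\ell = 1$ and $d - \ell \leq 1$ --- to require the most care.
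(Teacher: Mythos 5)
Your proposal follows essentially the same route as the paper: conformally identify the universal cover $H^\ell\times S^{d-\ell}$ with $\bR^d\setminus\bR^{\ell-1}$ (equivalently $S^d\setminus S^{\ell-1}$), observe that the cocompact Kleinian group $\pi_1(M\times N)$ has Poincar\'e exponent $\ell-1<\tfrac{d-2k}{2}$, build a positive Green's function by summing the flat fundamental solution over the orbit, extract a positive mass term to beat $Y_{2k}(S^d,[\grd])$ by test functions, and combine $P_{2k}>0$ (from \cref{factorization}) with positivity of the Green's function to produce a minimizer. The only difference is bookkeeping of citations: the paper uses Qing--Raske only for convergence of the Poincar\'e series, and then invokes Mazumdar--V\'etois for the test-function estimate and Mazumdar for the existence of the minimizer, rather than attributing the whole package to Qing--Raske.
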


\begin{proof}
 Denote $X^d := M^\ell \times N^{d-\ell}$ and $\sigma_0 := g \oplus h$.
 Since $(X^d,\sigma_0)$ is locally conformally flat, both $(M^\ell,g)$ and $(N^{d-\ell},h)$ are locally conformally flat~\cite{Besse}*{Example~1.167(3)}.
 Therefore their Riemannian universal covers are, up to isometry, the $\ell$-dimensional hyperbolic space $(H^\ell,\ghyp)$ and the $(d-\ell)$-dimensional sphere $(S^{d-\ell},\grd)$, respectively.
 
 Let $\pi \colon (\cX^d,\csigma_0) \to (X^d,\sigma_0)$ denote the Riemannian universal cover of $X$.
 Recall the model~\cite{MazzeoSmale1991}*{p.\ 583}
 \begin{equation*}
  (H^\ell \times S^{d-\ell}, \ghyp \oplus \grd) = \bigl( \bR^d \setminus \bR^{\ell-1} , \lv y\rv^{-2}(dx^2 \oplus dy^2) \bigr) ,
 \end{equation*}
 where $(x,y) \in \bR^{\ell-1} \times \bR^{d-\ell+1}$ and we write $dx^2$ and $dy^2$ for the Euclidean metrics on the respective factors.
 By the uniqueness of universal covers, we deduce that, up to isometry,
 \begin{equation*}
  \bigl( \cX , \csigma_0 \bigr) = \bigl( \bR^d \setminus \bR^{\ell-1} , \lv y\rv^{-2}(dx^2 \oplus dy^2) \bigr) .
 \end{equation*}
 Since stereographic projection from a point in $S^{\ell-1}$ maps $S^d \setminus S^{\ell-1}$ onto $\bR^d \setminus \bR^{\ell-1}$, we deduce that $\pi_1(X)$ is a Kleinian group with singular set equal to $S^{\ell-1}$.
 Since $\pi_1(X)$ is cocompact, its Poincar\'e exponent $\delta(\pi_1(X))$ equals the dimension of its singular set~\cite{Nicholls1989}*{Theorem~9.3.6};
 i.e.\ $\delta(\pi_1(X)) = \ell-1 < \frac{d-2k}{2}$.
 
 Now recall~\cite{GazzolaGrunauSweers2010}*{p.\ 48} that the fundamental solution for the GJMS operator $P_{2k}^{\gfl}$ of order $2k$ on flat Euclidean space $(\bR^d,\gfl)$, $d > 2k$, is
 \begin{equation}
  \label{eqn:standard-fundamental-solution}
  G_{2k}^{\gfl}(\zeta,\xi) = c_{n,k}\lv \zeta - \xi \rv^{2k-d} , \qquad c_{n,k} = \frac{\Gamma\bigl(\frac{d}{2}-k\bigr)}{4^{k}(k-1)!\pi^{d/2}} .
 \end{equation}
 Conformal covariance~\eqref{eqn:conformally-covariant} then implies that
 \begin{equation*}
  G_{2k}^{\csigma_0}(\zeta,\xi) = c_{n,k} \lv y(\zeta) \rv^{-\frac{d-2k}{2}} \lv \zeta - \xi \rv^{2k-d} \lv y(\xi) \rv^{-\frac{d-2k}{2}}
 \end{equation*}
 is such that $G_{2k,\zeta}^{\csigma_0} := G_{2k}^{\csigma_0}(\zeta,\cdot)$ satisfies $P_{2k}^{\csigma_0}G_{2k,\zeta}^{\csigma_0} = \delta_{\zeta}$ in the distributional sense with respect to the Riemannian volume element of $\csigma_0$.
 Since the Poincar\'e exponent of $\pi_1(X)$ satisfies $\delta(\pi_1(X)) < \frac{d-2k}{2}$, we deduce~\cite{QingRaske2006}*{Theorem~2.1} that
 \begin{equation}
  \label{eqn:defn-K}
  G_{2k}^{\sigma_0}(\zeta,\xi) := \sum_{\tau \in \pi_1(X)} G_{2k}^{\cgamma}(\zeta,\tau\xi)
 \end{equation}
 converges and is equal to the Green's function for $P_{2k}^{\sigma_0}$.
 Clearly $G_{2k}^{\sigma_0} > 0$.
 
 Fix $\zeta \in X^d$.
 Pick an evenly-covered neighborhood $U \subset X^d$ of $\zeta$ and a diffeomorphism $\varphi \colon U \to \varphi(U) \subset \bR^d$.
 Then the metric $\varphi^\ast (dx^2 \oplus dy^2)$ and the standard coordinates on $U$ give conformal normal coordinates around $\zeta$.
 We deduce from Equations~\eqref{eqn:standard-fundamental-solution} and~\eqref{eqn:defn-K} that
 \begin{equation*}
  G_{2k}^{\varphi^\ast (dx^2 \oplus dy^2)}(\zeta,\xi) = c_{n,k}\lv \zeta - \xi \rv^{2k-d} + \mu(\zeta) + O(1), \qquad \mu(\zeta) > 0 ,
 \end{equation*}
 for $\lv \xi - \zeta \rv$ small.
 Therefore~\cite{MazumdarVetois2020}*{Proposition~3.1} Estimate~\eqref{eqn:yamabe-estimate} holds.
 
 Finally, \cref{factorization} implies that $P_{2k}^{\sigma_0}>0$ (cf.\ \cite{QingRaske2006}*{Theorem~3.2}).
 Since $G_{2k}^{\sigma_0}>0$, we conclude using a result~\cite{Mazumdar2016}*{Theorem~3} of Mazumdar that there is a positive function $u \in C^\infty(M \times N)$ such that
 \begin{align*}
  \int_{M \times N} u \, P_{2k}^{\sigma_0} u \dvol_{\sigma_0} & = \frac{d-2k}{2}Y_{2k}(M^\ell \otimes N^{d-\ell}, [\sigma_0] ) , \\
  \int_{M \times N} u^{\frac{d}{d-2k}} \dvol_{\sigma_0} & = 1 .
 \end{align*}
 Then $\sigma := u^{\frac{4}{d-2k}}\sigma_0$ is the desired metric.
\end{proof}

Combining \cref{positivity,positive-mass} with the aformentioned result of Mazumdar~\cite{Mazumdar2016} yields our main existence theorem.

\begin{proof}[Proof of \cref{solvability}]
 Let $D := D(k,\ell)$ be the maximum of $2k+4$ and the constant from \cref{positivity}.
 Since $D \geq 2k+2\ell-1$, \cref{positive-mass} implies that we need only consider the case when $g \oplus h$ is not locally conformally flat.
 Since $D \geq 2k+4$, a higher-order version of Aubin's local test functions~\cite{MazumdarVetois2020}*{Proposition~2.1} implies that Estimate~\eqref{eqn:yamabe-estimate} holds.
 \Cref{factorization} implies that $P_{2k}^{g \oplus h}>0$, and \cref{positivity} implies that $G_{2k}^{g \oplus h}>0$.
 The conclusion now follows as in the last paragraph of the proof of \cref{positive-mass}.
\end{proof}

\section*{Acknowledgments}
JSC thanks Mar\'ia del Mar Gonz\'alez for directing us to her work~\cites{BettiolGonzalezMaalaoui2023,AoChanDelatorreFontelosGonzalezWei2019} on the fractional GJMS operators of the special Einstein product $H^\ell \times S^{d-\ell}$.

JSC was partially supported by the Simons Foundation (Grant \#524601), and by the Simons Foundation and the Mathematisches Forschungsinstitut Oberwolfach via a Simons Visiting Professorship.
AM is supported by the project \emph{Geometric Problems with Loss of Compactness} from Scuola Normale Superiore. He is also a member of GNAMPA as part of INdAM.

\bibliographystyle{abbrv}
\bibliography{bib}
\end{document}